% SIAM Article Template
\documentclass[onefignum,onetabnum]{siamart220329}
\allowdisplaybreaks

% Information that is shared between the article and the supplement
% (title and author information, macros, packages, etc.) goes into
% ex_shared.tex. If there is no supplement, this file can be included
% directly.

% SIAM Shared Information Template
% This is information that is shared between the main document and any
% supplement. If no supplement is required, then this information can
% be included directly in the main document.

% Packages and macros go here
\usepackage{lipsum}
\usepackage[export]{adjustbox}
\usepackage{amsfonts}
\usepackage{graphicx}
\usepackage{epstopdf}
\usepackage{algorithmic}
\usepackage{cancel}
\usepackage{tikz}
\usepackage{pgfplots}
\usetikzlibrary{shapes.arrows, patterns, calc}
\usepackage{tikz-3dplot}
\ifpdf
  \DeclareGraphicsExtensions{.eps,.pdf,.png,.jpg}
\else
  \DeclareGraphicsExtensions{.eps}
\fi

\usepgfplotslibrary{groupplots}
\usetikzlibrary{arrows}
\usetikzlibrary{shapes}
\usetikzlibrary{decorations.text}
\usetikzlibrary{quantikz}
\usepackage{siunitx}
\usepackage{comment}
\usetikzlibrary{arrows.meta}
\textwidth = 6.5 in
\textheight = 8 in

%=============
% Hyperlink colors
%=============
%\usepackage[usenames,dvipsnames]{xcolor}
\definecolor{steelblue}{HTML}{A1BDC7}
\definecolor{orange}{HTML}{D98C21}
\definecolor{silver}{HTML}{B0ABA8}
\definecolor{rust}{HTML}{B8420F}
\definecolor{seagreen}{HTML}{2E6B69}
\definecolor{joshua}{HTML}{FBDC7F}
\definecolor{darksky}{HTML}{154c79}

\colorlet{lightsilver}{silver!30!white}
\colorlet{lightlightsilver}{silver!10!white}
\colorlet{darkorange}{orange!85!black}
\colorlet{darksilver}{silver!85!black}
\colorlet{darksteelblue}{steelblue!85!black}
\colorlet{darkrust}{rust!85!black}
\colorlet{darkseagreen}{seagreen!85!black}

\definecolor{gblue}{HTML}{1D428A}
\definecolor{bgreen}{HTML}{00471B} 
\definecolor{hred}{HTML}{C8102E}  
\definecolor{cgreen}{HTML}{007A33}  

\definecolor{fred}{HTML}{B40F20}
\definecolor{fdarkorange}{HTML}{E58606}

\definecolor{ztealblue}{HTML}{3B9AB2}   
\definecolor{zlightblue}{HTML}{78B7C5}  
\definecolor{zyellow}{HTML}{EBCC2A}     
\definecolor{zgolden}{HTML}{E1AF00}     
\definecolor{zred}{HTML}{F21A00}

\colorlet{DGColor}{joshua}
\colorlet{DNPPGColor}{orange}
\colorlet{DFRGColor}{rust}
\colorlet{ExactColor}{steelblue}

\pgfplotsset{ compat=1.18,
    standard/.style={
      scale only axis,
      width=0.5\textwidth,
      enlarge x limits=0.05,
      enlarge y limits=0.05,
      max space between ticks=40,
      every axis/.append style={font=\small},
      every legend/.append style={font=\small},
      every node/.append style={font=\small},    
      },
      3d/.style={
      colormap={darkskycolormap}{
        color(0.0)=(zred),  
        color(0.05)=(zyellow),
        color(0.5)=(cgreen),
        color(1.0)=(gblue),
      }
      },
}

%=================================================
% Math macros
%=================================================

%=============
% Generalities
%=============
\usepackage{mathtools}
\usepackage{bbm}
\usepackage{amsmath}
\usepackage{amssymb}
\usepackage{stackengine}
\usepackage{fixmath}
\usepackage{bm}
%\usepackage{was}
%\DeclareMathAlphabet{\mathbold}{OML}{cmm}{b}{it}
\mathtoolsset{centercolon}  % Makes := typeset correctly for definitions

%%% Equation numbering
%\numberwithin{equation}{section} 

%%% Annotations

%==============
% Symbols
%==============

\renewcommand{\phi}{\varphi}
\renewcommand{\epsilon}{\varepsilon}

%% declares widecheck to match widehat
%% code from mathabx.sty and mathabx.dcl
\DeclareFontFamily{U}{mathx}{\hyphenchar\font45}
\DeclareFontShape{U}{mathx}{m}{n}{
      <5> <6> <7> <8> <9> <10>
      <10.95> <12> <14.4> <17.28> <20.74> <24.88>
      mathx10
      }{}
\DeclareSymbolFont{mathx}{U}{mathx}{m}{n}
\DeclareFontSubstitution{U}{mathx}{m}{n}
\DeclareMathAccent{\widecheck}{0}{mathx}{"71}
\DeclareMathAccent{\wideparen}{0}{mathx}{"75}

%==============
% Constants
%==============

% Set constants upright
\newcommand{\cnst}[1]{\mathrm{#1}}

\newcommand{\zerovct}{\mathbf{0}} % Zero vector
\newcommand{\Id}{\mathbf{I}} % Identity matrix

\newcommand{\zeromtx}{\bm{0}}

%==============
% Sets
%==============
\providecommand{\mathbbm}{\mathbb} % In case we don't load bbm

% Reals, complex, naturals
\newcommand{\R}{\mathbbm{R}}

%==============
% Probability
%==============

%==============
% Vectors and matrices 
%==============
\newcommand{\vct}[1]{\mathbold{#1}}
\newcommand{\mtx}[1]{\mathbold{#1}}
\newcommand{\tp}{{T}}

%==============
% Differential Geometry
%==============

%==============
% Information Geometry
%==============

\newcommand{\diff}{\Phi}

% Bregman divergence
\DeclarePairedDelimiterX{\infdivx}[2]{(}{)}{%
  #1\;\delimsize\|\;#2%
}
\newcommand{\Div}[1]{\mathbb{D}_{#1}\infdivx}
\newcommand{\DivKL}{\Div{\mathrm{KL}}}

\DeclarePairedDelimiterX{\Biginfdivx}[2]{\Big(}{\Big)}{%
  #1\;\delimsize\big\|\;#2%
}
\newcommand{\BigDiv}[1]{\mathbb{D}_{#1}\Biginfdivx}
\newcommand{\BigDivKL}{\BigDiv{\mathrm{KL}}}

\DeclarePairedDelimiterX{\biginfdivx}[2]{\big(}{\big)}{%
  #1\;\delimsize\|\;#2%
}
\newcommand{\bigDiv}[1]{\mathbb{D}_{#1}\biginfdivx}
\newcommand{\bigDivKL}{\bigDiv{\mathrm{KL}}}

%==============
% PDEs
%==============
\newcommand{\divergence}{\operatorname{div}}
% Differential operator
\newcommand\D{\mathop{}\cnst{d}}
% variation

\newcommand{\pdedomain}{\Omega}
\newcommand{\inbound}{\widecheck{\Gamma}} 
\newcommand{\outbound}{\widehat{\Gamma}} 
\newcommand{\indicator}{\mathbbm{1}}

%==============
% Numerical Analysis
%==============
\newcommand{\tr}{\phi}
\newcommand{\te}{\psi}
\newcommand{\trc}{r}

\newcommand{\trf}{\hat{\rho}}
\newcommand{\tef}{\hat{\sigma}}
\newcommand{\trs}{V}

%==============
% Relation Symbols
%==============

%==============
% Information geometric regularization
%==============

%==============
% Bootstrap and M{\"o}bius inversion
%==============

%==============
% Canceling
%==============
\newcommand\Ccancel[2][black]{
    \let\OldcancelColor\CancelColor
    \renewcommand\CancelColor{\color{#1}}
    \cancel{#2}
    \renewcommand\CancelColor{\OldcancelColor}
}

%==============
% Asymptotics
%==============
 
%\usetikzlibrary{external}
%\tikzexternalize[prefix=cache/]
\usepackage{pgfplotstable} 
\usepackage{booktabs}
\usepackage[section]{placeins}
\usepackage{float}
\usepackage{placeins}

%%%%%%%%%%%%%%%%%%%%%%%%%%%%%%%%%%%%%%%%%%%%%%%%%%%%%%%%%%%%%%%%%%%%%%%%%%%%%%%%
% Project-specific macro
%%%%%%%%%%%%%%%%%%%%%%%%%%%%%%%%%%%%%%%%%%%%%%%%%%%%%%%%%%%%%%%%%%%%%%%%%%%%%%%%
% Math 

%%%%%%%%%%%%%%%%%%%%%%%%%%%%%%%%%%%%%%%%%%%%%%%%%%%%%%%%%%%%%%%%%%%%%%%%%%%%%%%%

\ifpdf
  \DeclareGraphicsExtensions{.eps,.pdf,.png,.jpg}
\else
  \DeclareGraphicsExtensions{.eps}
\fi

% Add a serial/Oxford comma by default.

% Used for creating new theorem and remark environments
\newsiamremark{remark}{Remark}
\newsiamremark{hypothesis}{Hypothesis}
% \newsiamremark{example}{Example}
\crefname{hypothesis}{Hypothesis}{Hypotheses}
\newsiamthm{claim}{Claim}

% \siamthmstyleremark %subsequent new theorems will be created with italic head and roman body
% \siamthmstylenormal %subsequent new theorem-like items will be created with caps-small-caps head and italic body; this will be on by default

% Command to create remark-like environments, to be numbered
% along with the theorems.
\newcommand{\newglobalsiamremark}[2]{
  \theoremstyle{plain}
  \theoremheaderfont{\normalfont\itshape}
  \theorembodyfont{\normalfont}
  \theoremseparator{.}
  \theoremsymbol{}
  \newtheorem{#1}{#2}
}
\newglobalsiamremark{example}{Numerical Example}
\crefname{example}{Example}{Examples}

% Sets running headers as well as PDF title and authors
\headers{Maximum likelihood discretization of transport}{Brook Eyob and Florian Sch{\"a}fer}

% Title. If the supplement option is on, then "Supplementary Material"
% is automatically inserted before the title.
\title{Maximum likelihood discretization of the transport equation} %\thanks{Submitted to the editors DATE.}}
%\funding{This work was funded by the Fog Research Institute under contract no.~FRI-454.}

% Authors: full names plus addresses.
\author{Brook Eyob\thanks{Georgia Tech 
  (\email{brook@gatech.edu}).}
\and Florian Sch{\"a}fer\thanks{Georgia Tech 
  (\email{fts@gatech.edu}).}}

%%% Local Variables: 
%%% mode:latex
%%% TeX-master: "infburgers"
%%% End: 

\hypersetup{colorlinks=true,linkcolor=darkrust,citecolor=darkseagreen,urlcolor=darksilver}

% Optional PDF information
\ifpdf
\hypersetup{
  pdftitle={Maximum likelihood discretization of the transport equation},
  pdfauthor={Brook Eyob and Florian Sch{\"a}fer}
}

%%%%%%%%%%%%%%%%%%%%%%%%%%%%%%%%%%%%%%%%%%%%%%%%%%%%%%%%%%%%%%%%%%%%%%%%%%%%%%%%
% Project-specific macro
%%%%%%%%%%%%%%%%%%%%%%%%%%%%%%%%%%%%%%%%%%%%%%%%%%%%%%%%%%%%%%%%%%%%%%%%%%%%%%%%
% Todo:

% Math 

%%%%%%%%%%%%%%%%%%%%%%%%%%%%%%%%%%%%%%%%%%%%%%%%%%%%%%%%%%%%%%%%%%%%%%%%%%%%%%%%

% The next statement enables references to information in the
% supplement. See the xr-hyperref package for details.

% For now, disable the supplement
% \externaldocument[][nocite]{ex_supplement}

% FundRef data to be entered by SIAM
%<funding-group specific-use="FundRef">
%<award-group>
%<funding-source>
%<named-content content-type="funder-name"> 
%</named-content> 
%<named-content content-type="funder-identifier"> 
%</named-content>
%</funding-source>
%<award-id> </award-id>
%</award-group>
%</funding-group>

%\setlength{\intextsep}{1.0pt} % Space above and below the figure
%\setlength{\textfloatsep}{1.0pt} % Space between figure and text
%\setlength{\floatsep}{1.0pt} % Space between figures

\begin{document}

\maketitle

% REQUIRED
\begin{abstract}
  The transport of positive quantities underlies countless physical processes, including fluid, gas, and plasma dynamics.
  Discretizing the associated partial differential equations with Galerkin methods can result in spurious nonpositivity of solutions.
  We observe that these methods amount to performing statistical inference using the method of moments (MoM) and that the loss of positivity arises from MoM's susceptibility to producing estimates inconsistent with the observed data.
  We overcome this problem by replacing MoM with maximum likelihood estimation, introducing \emph{maximum likelihood discretization} (MLD).
  In the continuous limit, MLD simplifies to the Fisher-Rao Galerkin (FRG) semidiscretization, which replaces the $L^2$ inner product in Galerkin projection with the Fisher-Rao metric of probability distributions.
  We show empirically that FRG preserves positivity. We prove rigorously that it yields error bounds in the Kullback-Leibler divergence.
\end{abstract}

% REQUIRED
\begin{keywords}
  Galerkin discretization, method of moments, maximum likelihood estimation, Fisher-Rao metric, conservation law, invariant domain preserving discretization, information geometry
\end{keywords}

% REQUIRED
\begin{MSCcodes}
35L65, 76L05, 65M25, 76J20, 58B20
\end{MSCcodes}

\section{Introduction}
\subsection{Background:}
\subsubsection*{Transport of particle densities \nopunct} along prescribed velocity fields is ubiquitous in fluid and gas dynamics, for instance as part of the Euler, Navier-Stokes, MHD, Boltzmann, and Vlasov equations.
In its simplest form, the transport of the density $\rho_t$ along the velocity field $\vct{u}_t$ follows the conservation law $\dot{\rho}_t + \divergence (\rho_t \vct{u}_t) = 0$.

\subsubsection*{Numerical methods} Due to the importance and ubiquity of transport phenomena, numerous methods address their numerical simulation, including finite difference, finite volume, and (discontinuous) Galerkin methods.
Finite difference methods track the density values on a grid imposed on the physical domain, replacing derivatives with difference quotients \cite{sod1978survey,leveque2007finite}.
Finite volume methods instead track cell-averages over a tessellation of the physical domain, interacting through mass-fluxes across cell-interfaces \cite{leveque2002finite,barth2018finite}.
This ensures local conservation, which is critical for the convergence to the correct weak solution \cite{shi2018local,lax2005systems}.
The latter is also attainable by suitably chosen finite difference methods \cite{merriman2003understanding}.
Galerkin methods project the solution onto a finite dimensional subspace such as finite element spaces \cite{hughes1986newI,hughes1986newII,hughes1989new,quarteroni2009numerical}.
Discontinuous Galerkin methods combine the flexibility of Galerkin methods with the local conservation property of flux-based finite volume--type schemes \cite{cockburn2012discontinuous}.

\subsubsection*{Loss of positivity and relative accuracy} In applications like the transport of mass, probability, or energy densities, the transported quantity $\rho$ is necessarily positive.
The transport equation itself does not prohibit negative $\rho$-values, but downstream tasks like sampling, computing pressures or reaction rates break down in their presence.
% While a simple logarithmic parametrization $\partial_t \left(\log(\rho)\right) + \divergence(\log(\rho_t) \vct{u}_t) + \log \left(\rho_t\right) \divergence(\vct{u}_t) = 0$ ensures positivity at the cost of local conservation.
In the setting of discontinuous Galerkin methods, \cite{zhang2010positivity,xu2017bound,shu2018bound} introduce a limiter that scales the solution in each cell toward its mean.
Under a CFL-like condition, the positivity of the mean is ensured, enabling the limiter to guarantee positivity of the solution.
This modification avoids negative values but still loses relative accuracy in regions of near-zero density.
This can cause significant errors in derived quantities like reaction rates or pressures.

\subsection{This work} 

\subsubsection*{A statistician's lens on Galerkin methods} 
This work introduces a new perspective on Galerkin methods for transport and their loss of positivity.
It views the Galerkin basis as a parametric statistical model used to infer the unknown probability distribution $\rho_t$.
The advection of particles causes $\rho_t$ to leave the statistical model and to compute the next time step, the statistician needs to estimate the element of the model that best represents the advected particle distribution.
Standard Galerkin methods amount to using the method of moments (MoM) for solving this inference problem.
A well-known flaw of MoM is that it can return parameters that are unable to produce the observed data.
In Galerkin methods for transport, this amounts to the loss of positivity of $\rho_t$.

\subsubsection*{Maximum likelihood discretization} The method of maximum likelihood avoids many flaws of the MoM and always returns parameters under which the observed data has nonzero probability.
We replace the method of moments in standard Galerkin methods with the method of maximum likelihood.
This seemingly requires solving a non-quadratic optimization problem at each time step.
But in the limit of small time steps, it simplifies to the \emph{Fisher-Rao semidiscretization} that replaces the standard $L^2$ inner product with the $\rho^{-1}_{t}$-weighted one.
Solving it by standard time steppers has minimal computational overhead.

\subsubsection*{Properties of Maximum likelihood discretization} 
Continuous and discontinuous Fisher-Rao Galerkin methods satisfy global and local conservation, respectively.
Different from conventional Galerkin methods, this is \emph{not} conditional on the Galerkin space containing (piecewise) constant functions.
Fisher-Rao Galerkin methods use a different inner product at each time step.
This makes it challenging to analyze the resulting approximation error with respect to any single inner product.
However, we show that Fisher-Rao Galerkin methods retain error bounds in the Kullback-Leibler divergence sense, to the same convergence order as the standard Galerkin method.
We furthermore show empirically that FRG's errors match those standard Galerkin methods in the $L^2$ and $L^1$ norms, but are significantly lower in the Kullback-Leibler divergence sense.
Our numerical experiments show that FRG maintains the positivity of solutions.

\subsubsection*{Practical performance}
We validate our Fisher-Rao Galerkin (FRG) method on an extensive suite of numerical experiments in one and two-dimensional problems.
Throughout, FRG is able to preserve positivity in settings where the traditional Galerkin method fails.
It also shows lower relative error in regions of small density due to the Fisher-Rao metric emphasizing relative errors over absolute errors, unlike the $L^{2}$ metric.
This results from the parametrization-invariance of the Kullback-Leibler divergence.
FRG maintains similar convergence rates under grid refinement with respect to the $L^2$ and $L^{1}$ norm as the traditional Galerkin method.
It shows improved convergence rate and error with respect to the Kullback-Leibler (KL) divergence.
The Fisher-Rao Galerkin method may break down when using large time steps, although this is unlikely to be an issue in practice since the CFL condition would generally preclude large time step sizes.

\section{Maximum Likelihood Discretization}

\subsection*{The transport equation\nopunct} describes the advection of a scalar quantity $\rho_t$ under a velocity field $\vct{u}_t$ as 
\begin{equation}
     \label{eqn:transport}
     \begin{cases}
          \partial_t \rho_t + \divergence (\rho_t \vct{u}_t) = 0 \quad &\text{in $\pdedomain$}, \\
          \rho_t = \rho_t^{\mathrm{in}} \quad & \text{in $\inbound_t$}, \\
          \rho_0 = \rho_{\mathrm{init}} \quad & \text{in $\pdedomain$}.
     \end{cases}
\end{equation}
Here, $\inbound_t \coloneqq \{\vct{x} \in \partial \pdedomain: \vct{u}_t(\vct{x}) \cdot \vct{\nu}_{\pdedomain}(\vct{x}) < 0\}$ is the inflow boundary at time $t$, defined in terms of the outwards pointing normal $\vct{\nu}_{\pdedomain}$ of the domain $\pdedomain \subset \R^d$.
As part of the Euler and Navier-Stokes systems, \cref{eqn:transport} determines the mass density.
In these and many others instances, $\rho_t$ is nonnegative and integrable -- a probability density.
Since $\rho_t$ is the pushforward of $\rho_{\mathrm{init}}$ under the flow map generated by $\vct{u}_t$, it is nonnegative at all times.

\subsection*{The weak form} Integrating against smooth test functions $\te$ yields, for $\outbound \coloneqq \partial \pdedomain \setminus \inbound$, 
\begin{equation}
     \label{eqn:transport_weak}
     \begin{split}
     \partial_t \int \limits_{\pdedomain} \te \rho_t \D \vct{x} 
     - \int \limits_{\pdedomain} \left(\cnst{D}\te\right) \rho_t \vct{u} \D \vct{x} 
     + \int \limits_{\outbound_t} \te \overline{\rho}_t \vct{u}_t \cdot \vct{\nu}_{\pdedomain} \D \vct{x}
     + \int \limits_{\inbound_t} \te \rho^{\mathrm{in}}_t \vct{u}_t \cdot \vct{\nu}_{\pdedomain} \D \vct{x} 
     &= 0 \\
     \iff \ 
     \partial_t \int \limits_{\pdedomain} \te \rho_t \D \vct{x} 
     + \int \limits_{\pdedomain} \te \divergence\left(\rho_t \vct{u}\right) \D \vct{x} 
     - \int \limits_{\inbound_t} \te \rho_t \vct{u}_t \cdot \vct{\nu} \D \vct{x}
     + \int \limits_{\inbound_t} \te \rho^{\mathrm{in}}_t \vct{u}_t \cdot \vct{\nu} \D \vct{x} 
     &= 0.
     \end{split}
\end{equation}

\subsection*{Galerkin methods\nopunct} approximate the solution $\rho$ with an ansatz $\hat{\rho}(\vct{x}, t) = \sum_{i=1}^{m} \trc_i(t) \tr_i(\vct{x})$ in the span of a finite-dimensional basis $\{\tr\}_{1 \leq i \leq m}$ 
and restrict the test functions in \cref{eqn:transport_weak} to the same space, yielding
\begin{equation}
     \label{eqn:galerkin_transport}
     \begin{split}
     \sum_{j = 1}^{m} \dot{\trc}_j \int \limits_{\pdedomain} \phi_j \phi_i \D \vct{x} 
     + \trc_j \int \limits_{\pdedomain} \phi_j \divergence\left(\phi_i \vct{u}_t \right) \D \vct{x} 
     - r_j\int \limits_{\inbound} \tr_i \tr_j \vct{u}_t \cdot \vct{\nu} \D \vct{x}
     &= - \int \limits_{\inbound} \tr_i \rho^{\mathrm{in}}_t \vct{u}_t \cdot \vct{\nu} \D \vct{x}
     , \ \forall 1 \leq i \leq m \ \\
     \iff \ \int \limits_{\pdedomain} \tef \dot{\trf}_t \D \vct{x} 
     + \int \limits_{\pdedomain} \tef \divergence\left(\trf_t \vct{u}_t\right) \D \vct{x} 
     - \int \limits_{\inbound_t} \tef \trf_t \vct{u}_t \cdot \vct{\nu} \D \vct{x}
     &= - \int \limits_{\inbound_t} \tef \rho^{\mathrm{in}}_t \vct{u}_t \cdot \vct{\nu} \D \vct{x},
      \ \forall \tef \in \trs.
     \end{split}
\end{equation}
This \emph{semidiscretization} is amenable to standard time integrators such as Runge-Kutta methods \cite{isherwood2018strong}.

\subsection*{Loss of positivity}
Although the continuous equation \cref{eqn:transport} governing $\rho$ is nonnegativity preserving, this is not generally true for its Galerkin semidiscretization \cref{eqn:galerkin_transport}.
Under large density variations, it can produce negative densities that do not yield physically meaningful pressures, reaction rates, etc.

\subsection*{A statistical perspective} Interpreting the mass density $\rho$ as a probability distribution turns its transport into a statistical inference problem.
Let $\rho_t$ be contained in the parametric model of densities in $\trs$ at time $t$.
A particle sampled from $\rho_t$ in position $\vct{x}$ at time $t$ will find itself at approximately $\vct{x} + \delta_t \vct{u}_t(\vct{x})$ at time $t + \delta_t$.
Determining $\trc(t + \delta_t)$ amounts to choosing a statistical model $P_{\vct{\trc}(t + \delta_t)}$ with density $\sum_{i = 1}^m \trc_i(t + \delta_t) \tr_i$ that best describes the ``data'' obtained from advecting particles distributed according to $\rho_t$.

\begin{remark}
In general, it would be more correct to replace $\vct{x} + \delta_t \vct{u}(\vct{x}, t)$ with $\diff^{\vct{u}}_{\delta_t}(\vct{x})$, where $\diff^{\vct{u}}_{\delta_t}$ is the flow map generated by the vector field $\vct{u}_{(\cdot)}$ for a time interval $\delta_t$.
But since we are interested in small $\delta_t$, we can approximate $\diff^{\vct{u}}_{\delta_t}(\vct{x})$ by $\vct{x} + \delta_t \vct{u}_t(\vct{x})$ without changing our results.
We do so to simplify the notation.
\end{remark}

\subsection*{Moment matching and Galerkin methods} A popular approach for fitting models to data is moment matching, where for a set of test functions $\{\te\}_{1 \leq i \leq n}$, one chooses $\vct{\trc}(t + \delta_t)$ such that the expectations 
$\int \psi_i \D P_{\vct{\trc}}$ match those of the data $\int \psi_i \D P_{\mathrm{data}}$, for all $1 \leq i \leq n$.
For equation~\cref{eqn:transport} with $\vct{u}_t\equiv \zerovct$ on $\R^d \setminus \pdedomain$, this yields
\begin{equation}
     \sum_{i = 1}^{m} r_i(t + \delta_t) \int \limits_{\pdedomain} \phi_i(\vct{x}) \psi(\vct{x}) \D \vct{x} - r_i(t) \int \limits_{\pdedomain} \phi_i(\vct{x}) \psi(\vct{x} + \delta_t \vct{u}) \D \vct{x} 
     = 0, 
     \quad \forall 1 \leq j \leq n.
\end{equation}
Setting $m = n, \te_i = \tr_i$, taking derivatives in $\delta_t=0$ and integrating by parts yields the Galerkin method, whereas a general choice of $n, \te_i$ yields the more general Petrov-Galerkin method.
A well-known shortcoming of the method of moments is that it is not invariant under reparametrization and can return parameter estimates which are unable to produce the observed data.
The Galerkin method inherits these flaws, causing it to produce negative probabilities in the presence of large density-variations.

\subsection*{Maximum likelihood estimation\nopunct} is the dominant paradigm for frequentist parameter estimation.
Different from the method of moments, it is parametrization-invariant and ensures that the observed data is compatible with the inferred model.
It proposes choosing $\vct{\trc}\left(t + \delta_t\right)$ such as to maximize the (log)likelihood of the observed data under $P_{\vct{\trc}(t + \delta_t)}$.
In \cref{eqn:transport} with  $\vct{u}_t\equiv \zerovct$ on $\R^d \setminus \pdedomain$ and writing $P_{\vct{r}(t), \delta_t} \coloneqq \left(\vct{x} \mapsto \vct{x} + \delta_t \vct{u}_t \left(\vct{x}\right)\right)_{\#}$ for the distribution of the advected particles ($(\cdot)_\#$ denotes the pushforward measure), this amounts to solving
\begin{equation}
     \label{eqn:mle_transport}
     \vct{\trc}(t + \delta_t) = \operatorname{arg} \max\limits_{\hat{\vct{\trc}} \in \R^m} 
     \begin{cases}
          \int \limits_{\vct{x} \in \pdedomain} \log\left(\frac{\D P_{\hat{\vct{\trc}}}}{\D P_{\vct{r}(t), \delta_t \vct{u}_t}} \left(\vct{x}\right)\right) \D P_{\vct{r}(t), \delta_t \vct{u}_t}, 
          & \text{if $P_{\vct{\trc}\left(t\right), \delta_t \vct{u}_t} \ll P_{\hat{\vct{\trc}}}$,} \\ 
          - \infty, & \text{else.}
     \end{cases}
\end{equation}
This work proposes practical \emph{maximum likelihood discretizations} derived from this principle and thus \cref{eqn:mle_transport}.

\subsection*{Maximum likelihood discretization and information geometry}
The maximum likelihood estimator \cref{eqn:mle_transport} is equivalently characterized as the projection with respect to the Kullback-Leibler divergence of the observed data onto the statistical model \cite{amari2016information} resulting in
\begin{equation}
     \label{eqn:KL_projection}
          \vct{r}(t + \delta_t) = \operatorname{arg} \min\limits_{\hat{\vct{\trc}} \in \R^m} \left( 
          \DivKL{P_{\vct{\trc}\left(t\right), \delta_t \vct{u}_t}}{P_{\hat{\vct{\trc}}}}
          \coloneqq
          \begin{cases}
          \int \limits_{\vct{x} \in \pdedomain} \log\left(\frac{\D P_{\hat{\vct{\trc}}}}{\D P_{\vct{\trc}\left(t\right), \delta_t \vct{u}_t}} \left(\vct{x}\right)\right) \D P_{\vct{\trc}\left(t\right), \delta_t \vct{u}_t}& \text{if $P_{\vct{\trc}\left(t\right), \delta_t \vct{u}_t} \ll P_{\hat{\vct{\trc}}}$,} \\ 
          - \infty, & \text{else.} \\
          \end{cases}
          \right).
\end{equation}
Compared to the Galerkin method, this method replaces the squared $L^2$ metric with the Kullback-Leibler divergence when determining the element of the model ``closest'' to the data.
The $L^2$ projection onto a manifold $\mathcal{M}$ sends a point $P$ to $\mathrm{Proj}_{L^2}(P)$ along a straight line orthogonal to $\mathcal{M}$ in $\mathrm{Proj}_{L^2}$ with respect to the $L^2$ inner product.
Likewise, the Kullback-Leibler projection onto $\mathcal{M}$ sends a point $P$ to $\operatorname{Proj}_{\mathrm{KL}} (P)$ along a straight line in log-space, a ``dual geodesic'' orthogonal to the manifold with respect to the Fisher-Rao inner product in $\operatorname{Proj}_{\mathrm{KL}} (P)$.
Denoting as $\D P$ the Radon-Nikodym derivative (meaning, the density) of $P$ with respect to the Lebesgue measure, the Fisher-Rao inner product is defined as
\begin{equation}
     \left \langle p, q \right \rangle_{\operatorname{Proj}_{\mathrm{KL}}(P)} \coloneqq \int \limits_{\pdedomain} \frac{p q}{\frac{\D \left(\operatorname{Proj}_{\mathrm{KL}}(P)\right)}{\D \vct{x}}} \D \vct{x} 
\end{equation}
We refer to \cref{fig:galerkin_simplex} for an illustration and to \cite[Chapter 7]{amari2016information} for a detailed discussion.

\section{Continuous Fisher-Rao Galerkin (FRG) semidiscretization}
\label{sec:cfrg}
\subsection*{A maximum likelihood semidiscretization} We now derive a differential equation for $\vct{\trc}$ by taking the derivative with respect to $\delta_t$ in \cref{eqn:mle_transport}.
Integration of this \emph{semidiscretization} in time then yields an approximate solution of \cref{eqn:transport}.
We assume that $\{\tr_i\}_{1 \leq i \leq m}, \{\vct{u}_t\}_{t \geq 0}, \{\rho^{\mathrm{in}}_t\}_{t \geq 0}$ are Lipschitz continuous on $\overline{\pdedomain}$ and that there exists a linear combination of the $\tr_i$ that is lower bounded by a positive number from below.
We denote as $\overline{\rho}_{t}^{\mathrm{in}}, \overline{\vct{u}}_t$ the Lipschitz continuous extensions to $\R^d \setminus \pdedomain$ \cite{kirszbraun1934zusammenziehende}.

\subsection*{Maximality condition} 
We can simplify the maximum likelihood estimator in \cref{eqn:mle_transport} by plugging in the densities of $P_{\hat{\vct{\trc}}}$ and $P_{\vct{\trc}\left(t\right)}$ with respect to the Lebesgue measure.
Since the additive term due to $\int_{\vct{x} \in \pdedomain} \log\left(\D P_{\trc(t)}\left(\vct{x}\right)\right) \D P_{\trc(t)} \vct{x}$ does not depend on $\hat{\vct{r}}$, we obtain $\vct{r}(t + \delta_t)$ as the  minimizer of 
\begin{equation}
     \label{eqn:simple_mle_transport}
      \begin{split}
      \operatorname{arg} \max\limits_{\hat{\vct{r}} \in \R^m} \min_{\lambda \in \R} 
      &\int \limits_{\pdedomain^=} \log\left(\sum_{i = 1}^m \hat{r}_i \phi_i\left(\vct{x} + \delta_t \vct{u}_t\left(\vct{x}\right) \right)\right) \left(\sum_{i = 1}^m r_i(t) \phi_i\left(\vct{x}\right)\right)\D \vct{x} \\
      +&\int \limits_{\pdedomain^+} \log\left(\sum_{i = 1}^m \hat{r}_i \phi_i\left(\vct{x} + \delta_t \overline{\vct{u}}_t\left(\vct{x}\right) \right)\right) \overline{\rho}_{t}^{\mathrm{in}}\left(\vct{x}\right)\D \vct{x} \\
      +& \lambda \left(\int \limits_{\pdedomain} \sum \limits_{i = 1}^{m} \left(\trc_i - \hat{\trc}_i\right) \tr_i\left(\vct{x}\right) \D \vct{x} - \int \limits_{\pdedomain^+} \overline{\rho}_{t}^{\mathrm{in}}(\vct{x}) \D \vct{x} + \int \limits_{\pdedomain-} \left(\sum_{i = 1}^m r_i(t) \phi_i\left(\vct{x}\right)\right) \right).
      \end{split}
\end{equation}
Here, $\lambda$ is a Lagrange multiplier enforcing the global mass conservation, and
\begin{equation*}
      \pdedomain^= \coloneqq \{\vct{x} \in \pdedomain: \vct{x} + \delta_t \vct{u}_t(\vct{x}) \in \pdedomain\}, 
      \ \ \pdedomain^- \coloneqq \{\vct{x} \in \pdedomain: \vct{x} + \delta_t \vct{u}_t(\vct{x}) \notin \pdedomain\}, 
      \ \ \pdedomain^+ \coloneqq \{\vct{x} \in \R^d \setminus \pdedomain: \vct{x} + \delta_t \overline{\vct{u}}_t(\vct{x}) \in \pdedomain\},
\end{equation*}
are the sets of positions of particles that remain in $\pdedomain$, leave $\pdedomain$, and enter $\pdedomain$ under the displacement $\delta_t \vct{u}$, respectively.
Since the objective is concave and the constraint is linear, solutions of this problem are saddle points of the Lagrangian, with the Lagrange multiplier $\lambda$ encoding the global mass conservation.
Taking derivatives with respect to $\hat{\trc}_{k}$ for $1 \leq k \leq m$, we obtain the optimality condition
\begin{equation}
      \label{eqn:optimality_condition_r}
      \int \limits_{\pdedomain^=} \frac{\phi_k\left(\vct{x} + \delta_t \vct{u}_t\left(\vct{x}\right) \right) \left(\sum_{i = 1}^m r_i(t) \phi_i\left(\vct{x}\right)\right)}{\left(\sum_{i = 1}^m \hat{\trc}_i \phi_i\left(\vct{x} + \delta_t \vct{u}_t\left(\vct{x}\right) \right)\right)}\D \vct{x} 
      + \int \limits_{\pdedomain^+} \frac{\phi_k\left(\vct{x} + \delta_t \vct{u}_t\left(\vct{x}\right) \right) \overline{\rho}_{t}^{\mathrm{in}}}{\left(\sum_{i = 1}^m \hat{\trc}_i \phi_i\left(\vct{x} + \delta_t \vct{u}_t\left(\vct{x}\right) \right)\right)}\D \vct{x} 
      - \lambda \int \limits_{\pdedomain} \tr_k(\vct{x}) \D \vct{x} = 0,
\end{equation}
while the derivative with respect to $\lambda$ yields 
\begin{equation}
      \label{eqn:optimality_condition_lambda}
      \int \limits_{\pdedomain} \sum \limits_{i = 1}^{m} \left(\trc_i - \hat{\trc}_i\right) \tr_i\left(\vct{x}\right) \D \vct{x} - \int \limits_{\pdedomain^+} \overline{\rho}_{t}^{\mathrm{in}}(\vct{x}) \D \vct{x} + \int \limits_{\pdedomain-} \left(\sum_{i = 1}^m r_i(t) \phi_i\left(\vct{x}\right)\right) = 0.
\end{equation}

\subsection*{The time-continuous limit}
It is straightforward to see that for $\delta_t = 0$, the solution is given by $\hat{\vct{\trc}} = \vct{\trc}$, and $\lambda = 1$.
We now show that this system also has a solution for nonzero but sufficiently small $\delta_t$ and derive an equation for $\frac{\D \vct{\trc}}{\D \delta_t}$.
Applying the implicit function theorem needs the Jacobian in $\hat{\vct{\trc}} = \vct{\trc}, \lambda = 1, \delta_t = 0$.
The derivative of the left side of \cref{eqn:optimality_condition_r} in $\hat{\trc} = \trc, \lambda = 1, \delta_t = 0$ with respect to $\hat{\trc}_l$ is given by
\begin{equation}
      - \int \limits_{\pdedomain} \frac{\phi_k\left(\vct{x}\right) \phi_l\left(\vct{x}\right)}{\left(\sum_{i = 1}^m \trc_i \phi_i\left(\vct{x}\right)\right)}\D \vct{x}, \quad \text{and with respect to $\lambda$ by} \quad -\int \limits_{\pdedomain} \tr_k(\vct{x}) \D \vct{x}.
\end{equation}
The derivative of the left-hand side of equation \cref{eqn:optimality_condition_lambda} with respect to $\hat{\trc}_l$ is given by $\int_{\pdedomain} \tr_l(\vct{x}) \D \vct{x}$.
The Jacobian of the system is thus given by 
\begin{equation}
      \mtx{J} = 
      - 
      \begin{pmatrix}
       \left(\int \limits_{\pdedomain} \frac{\phi_k\left(\vct{x}\right) \phi_l\left(\vct{x}\right)}{\left(\sum_{i = 1}^m \trc_i \phi_i\left(\vct{x}\right)\right)}\D \vct{x}   \right)_{1 \leq k, l \leq m}   &     \left(\int \limits_{\pdedomain} \tr_k(\vct{x}) \D \vct{x}\right)_{1 \leq k \leq m} \\
       \left(\int \limits_{\pdedomain} \tr_k(\vct{x}) \D \vct{x}\right)_{1 \leq k \leq m}    & 0
      \end{pmatrix}.
\end{equation}
Setting $\mtx{A} = \left(- \int_{\pdedomain} \frac{\phi_k\left(\vct{x}\right) \phi_l\left(\vct{x}\right)}{\left(\sum_{i = 1}^m \trc_i \phi_i\left(\vct{x}\right)\right)}\D \vct{x}   \right)_{1 \leq k, l \leq m}$, 
$\mtx{B} = \left(-\int_{\pdedomain} \tr_k(\vct{x}) \D \vct{x}\right)_{1 \leq k \leq m}$, we can write, if $\mtx{A}$ is invertible, 
\begin{equation}
\mtx{J} =    
\begin{pmatrix}
      \mtx{A} & \mtx{B} \\
      \mtx{B}^T & \mtx{A} 
\end{pmatrix}
=
\begin{pmatrix}
      \Id & \zeromtx \\
      \mtx{B}^\tp \mtx{A}^{-1}  & \Id 
\end{pmatrix}
\begin{pmatrix}
      \mtx{A} & \zeromtx \\
      \zeromtx & -\mtx{B}^\tp \mtx{A}^{-1} \mtx{B} 
\end{pmatrix}
\begin{pmatrix}
      \mtx{A} & \mtx{A}^{-1} \mtx{B} \\
      \zeromtx & \Id
\end{pmatrix}
\end{equation}
If $\sum_{i=1}^{m} \trc_i \tr_i$ is uniformly bounded from above and away from zero, the eigenvalues of $\mtx{A}$ are upper and lower bounded in terms of the $L^2$ Gram matrix of the system $\left(\tr_i\right)_{1 \leq i \leq m}$.
If this system is linearly independent, the invertibility of $\mtx{J}$ follows.
We now define 
\begin{equation}
      \vct{b} \! \coloneqq 
      \! \! \frac{\D }{\D \delta_t} \left.
      \begin{pmatrix}
            \!\! \left(\int \limits_{\pdedomain^=} \frac{\phi_k\left(\vct{x} + \delta_t \vct{u}_t\left(\vct{x}\right) \right) \left(\sum_{i = 1}^m r_i(t) \phi_i\left(\vct{x}\right)\right)}{\left(\sum_{i = 1}^m \trc_i \phi_i\left(\vct{x} + \delta_t \vct{u}_t\left(\vct{x}\right) \right)\right)}\D \vct{x} \!
            +\! \int \limits_{\pdedomain^+} \frac{\phi_k\left(\vct{x} + \delta_t \vct{u}_t\left(\vct{x}\right) \right) \overline{\rho}_{t}^{\mathrm{in}}}{\left(\sum_{i = 1}^m \trc_i \phi_i\left(\vct{x} + \delta_t \vct{u}_{t}\left(\vct{x}\right) \right)\right)}\D \vct{x}
                  \!-\! \lambda \int \limits_{\pdedomain} \tr_k(\vct{x}) \D \vct{x},
            \right)_{1 \leq k \leq m}  \!\!\!\\
      \int \limits_{\pdedomain} \sum \limits_{i = 1}^{m} \left(\trc_i - \hat{\trc}_i\right) \tr_i\left(\vct{x}\right) \D \vct{x} - \int \limits_{\pdedomain^+} \overline{\rho}_{t}^{\mathrm{in}}(\vct{x}) \D \vct{x} + \int \limits_{\pdedomain-} \left(\sum_{i = 1}^m r_i(t) \phi_i\left(\vct{x}\right)\right) 
      \end{pmatrix}
      \!
      \right|_{\delta_t = 0}\!.%\\
\end{equation} 
We compute $\vct{b}$ in multiple steps, beginning with 
\begin{align*}
      \begin{split}
      &\frac{\D}{\D \delta_t}\left.\int \limits_{\pdedomain^=} \frac{\phi_k\left(\vct{x} + \delta_t \vct{u}_t\left(\vct{x}\right) \right) \left(\sum_{i = 1}^m r_i(t) \phi_i\left(\vct{x}\right)\right)}{\left(\sum_{i = 1}^m \trc_i \phi_i\left(\vct{x} + \delta_t \vct{u}_t\left(\vct{x}\right) \right)\right)}\D \vct{x}\right|_{\delta_t = 0}
      = \int \limits_{\pdedomain} \frac{\mathrm{D} \phi_k\left(\vct{x}\right) \vct{u}_t\left(\vct{x}\right)  \left(\sum_{i = 1}^m r_i(t) \phi_i\left(\vct{x}\right)\right)}{\sum_{i = 1}^m \trc_i \phi_i\left(\vct{x}\right)}\D \vct{x}\\
      -& \int \limits_{\pdedomain} \frac{\phi_k\left(\vct{x}\right) \left(\sum_{i = 1}^m r_i(t) \phi_i\left(\vct{x}\right)\right)\left(\sum_{i = 1}^m \trc_i \mathrm{D} \phi_i\left(\vct{x}\right) \vct{u}_t\left(\vct{x}\right) \right)}{\left(\sum_{i = 1}^m \trc_i \phi_i\left(\vct{x}\right)\right)^2}\D \vct{x}  
      - \int \limits_{\outbound_t} \frac{\phi_k\left(\vct{x}\right) \left(\sum_{i = 1}^m r_i(t) \phi_i\left(\vct{x}\right)\right) \vct{u}_t(\vct{x}) \cdot \vct{\nu}(\vct{x})}{\left(\sum_{i = 1}^m \trc_i \phi_i\left(\vct{x}\right)\right)}\D \vct{x}
      \end{split}\\
      =&\phantom{-} \int \limits_{\pdedomain} \mathrm{D} \left(\frac{\phi_k\left(\vct{x}\right) \vct{u}_t\left(\vct{x}\right)}{\sum_{i = 1}^m \trc_i \phi_i\left(\vct{x}\right)}\right) \left(\sum_{i = 1}^m r_i(t) \phi_i\left(\vct{x}\right)\right)\D \vct{x}
      - \int \limits_{\outbound_t} \frac{\phi_k\left(\vct{x}\right) \left(\sum_{i = 1}^m r_i(t) \phi_i\left(\vct{x}\right)\right) \vct{u}_t(\vct{x}) \cdot \vct{\nu}(\vct{x})}{\left(\sum_{i = 1}^m \trc_i \phi_i\left(\vct{x}\right)\right)}\D \vct{x}\\
      =& - \int \limits_{\pdedomain} \frac{\phi_k\left(\vct{x}\right) \divergence \left( \vct{u}_t\left(\vct{x}\right)  \left(\sum_{i = 1}^m r_i(t) \phi_i\left(\vct{x}\right)\right)\right)}{\sum_{i = 1}^m \trc_i \phi_i\left(\vct{x}\right)}\D \vct{x}
      + \int \limits_{\inbound_t} \frac{\phi_k\left(\vct{x}\right) \left(\sum_{i = 1}^m r_i(t) \phi_i\left(\vct{x}\right)\right) \vct{u}_t(\vct{x}) \cdot \vct{\nu}(\vct{x})}{\left(\sum_{i = 1}^m \trc_i \phi_i\left(\vct{x}\right)\right)}\D \vct{x},
\end{align*}
where the last equality follows from integration by parts.Similarly, we compute 
\begin{align}
      \frac{\D}{\D \delta_t}\left. \int \limits_{\pdedomain^+} \frac{\phi_k\left(\vct{x} + \delta_t \overline{\vct{u}}_t\left(\vct{x}\right) \right) \overline{\rho}_{t}^{\mathrm{in}}}{\left(\sum_{i = 1}^m \trc_i \phi_i\left(\vct{x} + \delta_t \vct{u}_t\left(\vct{x}\right) \right)\right)}\D \vct{x}\right|_{\delta_t = 0} 
      = - \int \limits_{\inbound_t} \frac{\phi_k\left(\vct{x}\right) \rho_{t}^{\mathrm{in}} \vct{u}(\vct{x}) \cdot \vct{\nu}(\vct{x})}{\left(\sum_{i = 1}^m \trc_i \phi_i\left(\vct{x}\right)\right)}\D \vct{x}
\end{align}
and 
\begin{align*}
&\frac{\D}{\D \delta_t}\left. - \int \limits_{\pdedomain^+} \overline{\rho}_{t}^{\mathrm{in}}(\vct{x}) \D \vct{x} + \int \limits_{\pdedomain-} \left(\sum_{i = 1}^m r_i(t) \phi_i\left(\vct{x}\right)\right) \D \vct{x} \right|_{\delta_t = 0}  \\
=& \int \limits_{\inbound_t} \rho_{t}^{\mathrm{in}}(\vct{x}) \vct{u}_t\left(\vct{x}\right) \cdot \vct{\nu}\left(\vct{x}\right) \D \vct{x} 
+ \int \limits_{\outbound_t} \left(\sum_{i = 1}^m r_i(t) \phi_i\left(\vct{x}\right)\right) \vct{u}_t\left(\vct{x}\right) \cdot \vct{\nu}\left(\vct{x}\right) \D \vct{x}, 
\end{align*}
resulting in 
\begin{equation}
      \vct{b} = 
      \begin{pmatrix}
            \left(- \int \limits_{\pdedomain} \frac{\phi_k\left(\vct{x}\right) \divergence \left( \vct{u}_t\left(\vct{x}\right)  \left(\sum_{i = 1}^m r_i(t) \phi_i\left(\vct{x}\right)\right)\right)}{\sum_{i = 1}^m \trc_i \phi_i\left(\vct{x}\right)}\D \vct{x}
      - \int \limits_{\inbound_t} \frac{\phi_k\left(\vct{x}\right) \left( \rho_{t}^{\mathrm{in}}\left(\vct{x}\right) - \sum_{i = 1}^m r_i(t) \phi_i\left(\vct{x}\right)\right) \vct{u}_t(\vct{x}) \cdot \vct{\nu}(\vct{x})}{\left(\sum_{i = 1}^m \trc_i \phi_i\left(\vct{x}\right)\right)}\D \vct{x}
      \right)_{1 \leq k \leq m} \\
      \int \limits_{\inbound_t} \overline{\rho}_{t}^{\mathrm{in}}(\vct{x}) \vct{u}_t\left(\vct{x}\right) \cdot \vct{\nu}\left(\vct{x}\right) \D \vct{x} 
      + \int \limits_{\outbound_t} \left(\sum_{i = 1}^m r_i(t) \phi_i\left(\vct{x}\right)\right) \vct{u}_t\left(\vct{x}\right) \cdot \vct{\nu}\left(\vct{x}\right) \D \vct{x}, 
      \end{pmatrix}
\end{equation}
The implicit function theorem guarantees the existence of a unique solution for $\delta_t$ sufficiently small, satisfying
\begin{equation}
      \frac{\D }{\D \delta_t} \left.
      \begin{pmatrix}
            \hat{\vct{\trc}} \\
            \lambda
      \end{pmatrix}
      \right|_{\delta_t = 0} = \mtx{J}^{-1} \vct{b}.
\end{equation}
Guessing $\frac{\D}{\D \delta_t} \lambda = 0$, this implies that $\dot{\vct{\trc}} \coloneqq \frac{\D }{\D \delta_t} \left. \hat{\vct{\trc}} \right|_{\delta_t = 0}$ is the unique solution of
\begin{equation}
      \label{eqn:first_condition}
      \begin{split}
            \forall 1 \leq j \leq m: 
            \int \limits_{\pdedomain} \frac{\phi_j\left(\vct{x} \right) \left(\sum_{i = 1}^m \dot{\trc}_i \phi_i\left(\vct{x}\right)\right)}{\left(\sum_{i = 1}^m \trc_i \phi_i\left(\vct{x} \right)\right)}\D \vct{x} 
            + &\int \limits_{\pdedomain} \frac{\phi_j\left(\vct{x}\right) \divergence \left( \vct{u}_t\left(\vct{x}\right)  \left(\sum_{i = 1}^m r_i(t) \phi_i\left(\vct{x}\right)\right)\right)}{\sum_{i = 1}^m \trc_i \phi_i\left(\vct{x}\right)}\D \vct{x} \\
            + &\int \limits_{\inbound_t} \frac{\phi_j\left(\vct{x}\right) \left( \rho_{t}^{\mathrm{in}}\left(\vct{x}\right) - \sum_{i = 1}^m r_i(t) \phi_i\left(\vct{x}\right)\right) \vct{u}_t(\vct{x}) \cdot \vct{\nu}(\vct{x})}{\left(\sum_{i = 1}^m \trc_i \phi_i\left(\vct{x}\right)\right)}\D \vct{x} 
            = 0.
      \end{split}
\end{equation}
To verify that this guess is correct, we have to ensure that 
\begin{equation}
      \int \limits_{\pdedomain} \sum \limits_{i = 1}^{m} \dot{\trc}_i \tr_i\left(\vct{x}\right) \D \vct{x} 
      +\int \limits_{\inbound_t} \overline{\rho}_{t}^{\mathrm{in}}(\vct{x}) \vct{u}_t\left(\vct{x}\right) \cdot \vct{\nu}\left(\vct{x}\right) \D \vct{x} 
      + \int \limits_{\outbound_t} \left(\sum_{i = 1}^m r_i(t) \phi_i\left(\vct{x}\right)\right) \vct{u}_t\left(\vct{x}\right) \cdot \vct{\nu}\left(\vct{x}\right) \D \vct{x} = 0. 
\end{equation}
This follows by linear combination of \cref{eqn:first_condition} with weights $\left(\hat{\trc}_j\right)_{1 \leq j \leq m} = \left(\trc_j\right)_{1 \leq j \leq m}$ and integration by parts. 
\subsection*{The Fisher-Rao Galerkin method} 
Analogous to equation \cref{eqn:galerkin_transport}, we can write the equation for $\vct{\trc}$ as
\begin{equation}
      \begin{split}
      \label{eqn:FRgalerkin_transport}
      \sum_{j = 1}^{m} \dot{\trc}_j \int \limits_{\pdedomain} \frac{\phi_j \phi_i}{\sum_{k = 1}^m \trc_k \phi_k} \D \vct{x} + \trc_j \int \limits_{\pdedomain} \frac{\phi_i \divergence\left(\phi_j \vct{u}_t \right)}{{\sum_{k = 1}^m \trc_k \phi_k}} \D \vct{x} 
      - \trc_j \int \limits_{\inbound_t} \frac{\phi_j \phi_i \vct{u}_t \cdot \vct{\nu}}{\sum_{k = 1}^m \trc_k \phi_k}\D \vct{x} 
            &= -\int \limits_{\inbound_t} \frac{\phi_i \rho_{t}^{\mathrm{in}}\vct{u}_t \cdot \vct{\nu}}{\sum_{k = 1}^m \trc_k \phi_k}\D \vct{x}, \forall 1 \leq i \leq m \\ 
      \iff \quad \int \limits_{\pdedomain} \frac{\tef \dot{\trf}_t}{\trf_t} \D \vct{x} 
            + \int \limits_{\pdedomain} \frac{\tef \divergence\left(\trf_t \vct{u}_t\right)}{\trf_t}\D \vct{x} 
            - \int \limits_{\inbound_t} \frac{\tef \trf_t \vct{u}_t \cdot \vct{\nu}}{\trf_t} 
            &= - \int \limits_{\inbound_t} \frac{\tef \rho_{t}^{\mathrm{in}} \vct{u}_t \cdot \vct{\nu}}{\trf_t}, \ \forall \tef \in \trs .
      \end{split}
\end{equation}
This replaces the $L^2$ with the Fisher-Rao inner product, emphasizing \emph{relative}, instead of absolute errors.
\subsection*{A geometric view on Fisher-Rao Galerkin}
Solutions of the transport equation, a hyperbolic PDE, evolves along characteristic lines. 
Their evolution is generally not constrained to any finite dimensional Galerkin space. 
In the 1D example in \cref{fig:galerkin_projection}, we start with an initial condition within the Galerkin space of piecewise constant functions, according to the grid in black.
As the solution evolves, along the characteristic curves (orange), it leaves the Galerkin space.
\cref{fig:galerkin_projection_step} illustrates how the Galerkin method projects the solution back onto the Galerkin space, so only the coefficients $r_i$ need to be tracked.
It uses finite time steps for illustration purposes but in semidiscretizations like \cref{eqn:galerkin_transport}, the projection happens continuously and the solution never leaves the Galerkin space.
The traditional Galerkin method applies an $L^2$ projection at each time step corresponding to the Method of Moments. 
\cref{fig:galerkin_simplex} illustrates that while the time evolution preserves positivity, the projection step does not, in general \cite{nochetto2002positivity}.
Instead, the Fisher-Rao Galerkin method applies a Fisher-Rao projection at each time step. This amounts to continuously performing maximum likelihood estimation to determine the coefficients $r_i$ that best represent the solution.

\begin{figure}[H]

      \centering

      \includegraphics{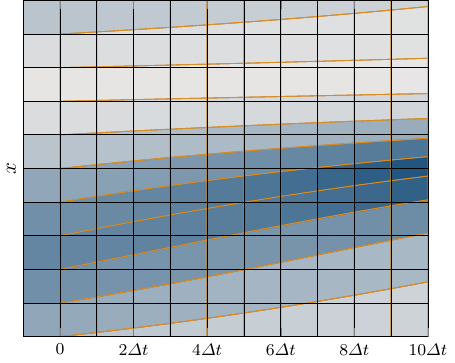}

    \caption{The solution leaves the Galerkin space (shown as mesh), even if the initial condition is contained in it.}
    \label{fig:galerkin_projection}

\end{figure}

\begin{figure}[H]

      \centering

      \includegraphics{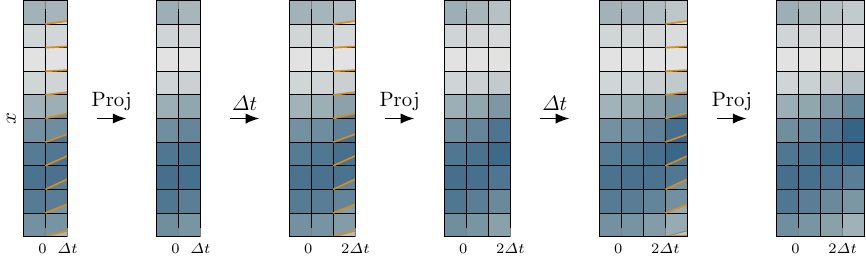}

      \caption{Galerkin projection of the solution to transport equation after each time step $\Delta t$.}

      \label{fig:galerkin_projection_step}

\end{figure}

\begin{figure}[H]

      \centering

      \includegraphics{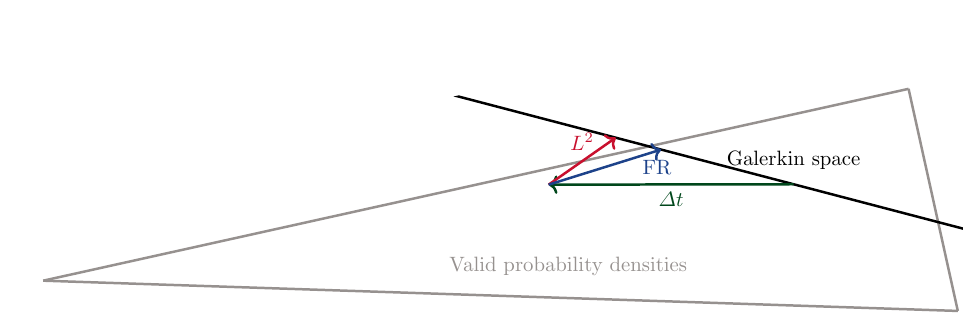}

      \caption{   A 2D simplex representing subset of feasible probability distributions, intersected by 2D subset of Galerkin space, which contains feasible approximate solutions.
      A solution initial in the Galerkin space exits it after a time-step, requiring projection back onto the Galerkin space.
      In this example, the $L^{2}$ projection may leave the simplex (violating probability constraints), while a Fisher-Rao (FR) projection remains in the simplex structure, ensuring a valid probability distribution.}

      \label{fig:galerkin_simplex}

\end{figure}

\subsection*{A nonlinear projection} Negative results by \cite{nochetto2002positivity} show that in the case of finite elements (where $\trs$ consists of piecewise polynomial function), there is no linear projection operator that preserves positivity.
Instead, of linear projections with respect to the $L^2$ inner product, maximum likelihood estimation performs a nonlinear projection with respect to a Bregman divergence --- the Kullback-Leibler divergence \cref{eqn:KL_projection}.
In the time-continuous limit, this projection is equivalent to projection with respect to the Fisher-Rao metric, the second order approximation of $\DivKL{\rho}{\sigma}$ in $\rho \approx \sigma$. 
\cref{sec:kl_approximation} shows that just like projection using inner products yields error bounds in the associated metric, projection with the Fisher-Rao metric yields error bounds in the associated Bregman divergence.

\section{Discontinuous Fisher-Rao Galerkin (DFRG) semidiscretization}
\subsection*{Discontinuous Galerkin (DG) methods\nopunct} overcome two shortcomings of Galerkin methods on transport problems: the lack of local conservation and the requirement to invert a mass matrix when computing $\dot{\vct{\trc}}$.
They achieve this by partitioning the domain $\pdedomain = \bigcup_{k = 1}^{n} \overline{\omega}_{k}$ into disjoint open sets $\left(\omega_{k}\right)_{n}$, such that every basis function $\tr_i$ is supported on a single $\omega_{k}$.
We write $\vct{o} \in \{1 \ldots n\}^{m}$ for the vector assigning the $m$ basis function indices to the $n$ domains such that $\tr_i$ is supported on $\omega_{o_i}$.
Discontinuous Galerkin methods integrate $\sum_{i = 1}^{m} \dot{\trc}_i \tr + \trc_i \divergence\left(\tr_i \vct{u}_t \right) = 0$ against $\tr_j$ only over $\omega_{o_j}$.
Integration by parts then yields
\begin{align}
     \label{eqn:dgalerkin_transport}
     \sum_{i = 1}^{m} \dot{\trc}_i \int \limits_{\omega_{o_i}} \phi_i \phi_j \D \vct{x} 
     - \trc_i \int \limits_{\omega_{o_i}} \cnst{D} \phi_i \phi_j \vct{u}_t \D \vct{x} 
     + \int \limits_{\partial \omega_{o_i}} \tr_i \phi_j \vct{u}_t \cdot \vct{\nu} \D \vct{x} = 0, \ \forall 1 \leq j \leq m \\
      \iff \ \int \limits_{\omega_{k}}\dot{\trf}_t \tef \D \vct{x} - \int \limits_{\omega_{k}} [\cnst{D} \tef] \trf_t \vct{u}_t \D \vct{x} + \int \limits_{\partial \omega_{k}} \tef \trf_t \vct{u}_t \cdot \vct{\nu} \D \vct{x}= 0, \ \forall \tef \in \trs, 1 \leq k \leq n,
\end{align}
for $\vct{\nu}$ the outward pointing unit normal vector on $\partial \omega$.
At this point, the weak formulations on the different $\omega_{k}$ are completely independent.
DG methods relate the problems on different subdomains by replacing the $\trf \vct{u}_t \cdot \vct{\nu}$ in the boundary integrals with a \emph{numerical flux} $f: \partial \omega_k \rightarrow \R$.
To ensure local conservation, they require that along the shared boundary of two domains $\omega^+,\omega^-$ the numerical flux satisfies $f^{\omega_+, \vct{u}, \trf} = -f_{\omega_-, \vct{u}, \trf}$.
Numerous numerical fluxes have been proposed, all of which can be viewed as approximate solution of a Riemann problem in the boundary layer.
Most popular are the Lax-Friedrichs and upwind fluxes,
\begin{equation}
\label{eqn:fluxes}
f_{\mathrm{LF}}^{\omega_k, \trf, \vct{u}_t}(x) = \frac{\trf_+ + \trf_-}{2} \vct{u}_t \cdot \vct{\nu} - \alpha \frac{\trf_+ - \trf_-}{2}, 
\qquad f_{\mathrm{up}}^{\omega_k, \trf, \vct{u}_t}(x) = \trf_{-\operatorname{sign}\left(\vct{u}_t \cdot \vct{\nu}\right)} \vct{u}_t \cdot \vct{\nu}.
\end{equation}
Here, $\alpha \in \R_+$ modulates the numerical dissipation of the Lax-Friedrichs flux.
The $\trf^{\pm}$ are the values of $\trf$ on $\omega_{\pm}$ and the interface-normal $\vct{\nu}$ points from $\omega_{-}$ to $\omega_{+}$.
Treating inflow conditions $\rho_{t}^{\mathrm{in}}$ as boundary values on outward cell interfaces $\partial \omega_{k} \cap \partial \inbound_t$, the resulting discontinuous Galerkin method is
\begin{equation}
     \int \limits_{\omega_{k}}\dot{\trf} \tef \D \vct{x} - \int \limits_{\omega_{k}} [\cnst{D} \tef] \trf \vct{u}_t \D \vct{x} + \int \limits_{\partial \omega_{k}} \tef f^{\omega_k, \trf, \vct{u}_t} \D \vct{x}= 0, \ \forall \tef \in \trs, 1 \leq k \leq n,
\end{equation}
where the numerical flux $f$ depends on the current solution $\trf$ in $\omega_{k}$ and adjacent cells, as well as $\omega_{k}$ and $\vct{u}_t$.

\subsection*{Benefits of DG methods} 
DG methods have two key benefits over continuous Galerkin methods.
The first is that DG methods with $\left\{\indicator_{\omega_{k}}\right\}_{1 \leq k \leq n} \subset \{\tr_{j}\}_{1 \leq j \leq m}$ are locally conservative, meaning that the integral of $\trf$ in each $\omega_{k}$ is constant, up to the integral over the numerical flux on the boundary.
In other words, the total amount of mass in each $\omega_k$ is constant, up to the changes due to flow across the boundary.
This follows from setting $\tr_j = \indicator_{\omega_{k}}$ for $1 \leq k \leq n$ in \eqref{eqn:dgalerkin_transport}.
The second benefit of DG methods is that the mass matrix
\begin{equation}
\mtx{M} \coloneqq 
\left(\int \limits_{\Omega} \phi_i \phi_j \D \vct{x}\right)_{1 \leq i, j \leq m} 
= \left(\int \limits_{\omega_{o_i}} \phi_i \phi_j \D \vct{x}\right)_{1 \leq i, j \leq m}
= \left(\int \limits_{\omega_{o_j}} \phi_i \phi_j \D \vct{x}\right)_{1 \leq i, j \leq m}
\end{equation}
is block diagonal, with each block corresponding to a single $\omega_{k}$.
Since the number of basis functions in each $\omega_{k}$ is a small and constant under refinement, this greatly simplifies the computation of $\dot{\vct{\trc}}$.

\subsection*{Discontinuous Fisher-Rao Galerkin (DFRG) methods} 
Applying the argument of \cref{sec:cfrg} to a single $\omega$, replacing Lipschitz extensions $\overline{\rho}^{\mathrm{in}}_t,\overline{\vct{u}}$ with the values on neighboring cells, we obtain $\forall 1 \leq i \leq m$, 
\begin{equation}
      \begin{split}
      \label{eqn:DFRgalerkin_transport_single_domain}
      \sum_{j = 1}^{m} \dot{\trc}_j \int \limits_{\omega} \frac{\phi_j \phi_i}{\sum_{k = 1}^m \trc_k \phi_k} \D \vct{x} + \trc_j \int \limits_{\omega} \frac{\phi_i \divergence\left(\phi_j \vct{u}_t \right)}{{\sum_{k = 1}^m \trc_k \phi_k}} \D \vct{x} 
      - \trc_j \int \limits_{\widecheck{\gamma}_{t}} \frac{\phi_j \phi_i \vct{u}_t \cdot \vct{\nu}}{\sum_{k = 1}^m \trc_k \phi_k}\D \vct{x} 
      &= -\int \limits_{\widecheck{\gamma}_t} \frac{\phi_i \rho_{t}^{\mathrm{in}}\vct{u}_t \cdot \vct{\nu}}{\sum_{k = 1}^m \trc_k \phi_k}\D \vct{x}, \\
      \iff \quad \int \limits_{\omega} \frac{\tef \dot{\trf}}{\trf} \D \vct{x} 
      + \int \limits_{\omega} \frac{\tef \divergence\left(\trf \vct{u}_t\right)}{\trf}\D \vct{x} 
      - \int \limits_{\widecheck{\gamma}_t} \frac{\tef \trf \vct{u}_t \cdot \vct{\nu}}{\trf} 
      &= -\int \limits_{\widecheck{\gamma}_t} \frac{\tef \trf \vct{u}_t \cdot \vct{\nu}}{\trf}, \ \forall \tef \in \trs,
      \end{split}
\end{equation}
where, as before, $\widecheck{\gamma}_t$ is the inflow boundary of $\omega$.
Integration by parts yields, $\forall 1 \leq i \leq m$,
\begin{equation}
      \begin{split}
      \label{eqn:DFRgalerkin_transport_single_domain_IBP}
      \sum_{j = 1}^{m} 
          \dot{\trc}_j \int \limits_{\omega} \frac{\phi_j \phi_i}{\sum_{k = 1}^m \trc_k \phi_k} \D \vct{x} 
          - \trc_j \int \limits_{\omega} \left[\cnst{D} \frac{\phi_i }{{\sum_{k = 1}^m \trc_k \phi_k}}\right] \phi_j \vct{u} \D \vct{x} 
          + \trc_j \int \limits_{\widehat{\gamma}_{t}} \frac{\phi_j \phi_i \vct{u}_t \cdot \vct{\nu}}{\sum_{k = 1}^m \trc_k \phi_k}\D \vct{x} 
      &= -\int \limits_{\widecheck{\gamma}_t} \frac{\phi_i \rho_{t}^{\mathrm{in}}\vct{u}_t \cdot \vct{\nu}}{\sum_{k = 1}^m \trc_k \phi_k}\D \vct{x}, \\
      \iff \quad \int \limits_{\omega} \frac{\tef \dot{\trf}}{\trf} \D \vct{x} 
          - \int \limits_{\omega} \left[\cnst{D} \frac{\tef}{\trf}\right]\trf \vct{u}\D \vct{x} 
          + \int \limits_{\widehat{\gamma}_t} \frac{\tef \trf \vct{u}_t \cdot \vct{\nu}}{\trf} 
          &= - \int \limits_{\widecheck{\gamma}_t} \frac{\tef \trf \vct{u}_t \cdot \vct{\nu}}{\trf}, \ \forall \tef \in \trs.
      \end{split}
\end{equation}
Here, $\widehat{\gamma}_t \coloneqq \partial \omega \setminus \widecheck{\gamma}_t$.
Summing up the terms obtained for each $\omega$ results in 
\begin{align}
     \label{eqn:DFRgalerkin_transport}
     \quad \int \limits_{\omega_{k}} \frac{\dot{\trf} \tef}{\trf} \D \vct{x} 
     - \int \limits_{\omega_{k}} \left[\cnst{D} \frac{\tef }{\trf}\right]\trf \vct{u}_t \D \vct{x} 
     + \int \limits_{\partial \omega_{k}} \frac{\tef f_{\mathrm{up}}^{\omega_k, \trf, \vct{u}_t}}{\trf}\D \vct{x} 
     = 0, \ \forall \tef \in \trs, 1 \leq k \leq n.
\end{align}
\begin{remark}
     By doing the analog computation for the method of moments with test functions $\te = \tr$, one can likewise derive ordinary discontinuous Galerkin methods with upwind flux.
\end{remark}
\begin{remark}
     If the velocity $\vct{u}_t$ has jumps across cell interfaces, for instance due to being discontinuously Galerkin discretized itself, the same derivation yields (for both ordinary and Fisher-Rao DG), the flux 
     \begin{equation}
          f_{\mathrm{kin}}^{k, \trf, \vct{u}_t}(x) 
          = \trf_{-} \max\left(\vct{u}^{-}_t \cdot \vct{\nu}, 0\right)
          + \trf_{+} \min\left(\vct{u}^{+}_t \cdot \vct{\nu}, 0\right).
     \end{equation}
     It is closely related to the kinetic flux vector splitting for Euler equations due to \cite{pullin1980direct,mandal1994kinetic}.
\end{remark}

\subsection*{Benefits inherited from DG}
Discontinuous Fisher-Rao Galerkin inherits the benefits of both DG and Fisher-Rao Galerkin methods.
First, it is straightforward to see that the Fisher-Rao mass matrix 
\begin{equation}
\mtx{M}^{\trf} \coloneqq 
\left(\int \limits_{\Omega} \frac{\phi_i \phi_j}{\trf} \D \vct{x}\right)_{1 \leq i, j \leq m} 
= \left(\int \limits_{\omega_{o_i}} \frac{\phi_i \phi_j}{\trf} \D \vct{x}\right)_{1 \leq i, j \leq m}
= \left(\int \limits_{\omega_{o_j}} \frac{\phi_i \phi_j}{\trf} \D \vct{x}\right)_{1 \leq i, j \leq m},
\end{equation}
shares the block-diagonal structure of the DG mass matrix $\mtx{M}$.
Furthermore, a linear combination of equations~\cref{eqn:DFRgalerkin_transport} with coefficients $\vct{\trc}$, thus using $\trf$ as a test function, implies that the mass in each $\omega_{k}$ is conserved up to fluxes through the boundary.
Unlike DG, this does not require $\left\{\indicator_{\omega_{k}}\right\}_{1 \leq k \leq n} \subset \{\tr_{j}\}_{1 \leq j \leq m}$.
Conservation of $\omega_k$ follows by choosing the restriction $\tef = \trf\big|_{\omega_k}$ in equation~\cref{eqn:DFRgalerkin_transport} as test function.
By construction, this function is in the test spaces, irrespective of the choice of basis functions.
In the next section, we will showcase the theoretical benefits of (D)FRG methods over conventional (D)G approaches.

\section{KL-approximation properties of (D)FRG semidiscretizations}
\label{sec:kl_approximation}
\subsection*{The Kullback-Leibler (KL) divergence} 
A key measure of quality of a semidiscretization is how quickly solutions $\trf$ of the resulting ODE diverge from the true solution $\rho$ of the PDE.
Ordinary Galerkin methods are defined as projections with respect to an inner product.
Their error estimates are naturally formulated in terms of the norm induced by this inner product.
(Discontinuous) Fisher-Rao Galerkin methods are projections with respect to the Fisher-Rao inner product, which depends on the current solution $\trf$.
Thus, no single norm can adequately capture the resulting error estimate.
Instead, we will capture the error in terms of the (generalized) Kullback-Leibler divergence.
\begin{definition}
   The (generalized) Kullback-Leibler (KL) divergence $\DivKL{\rho}{\sigma}$ between two (unnormalized) probability measures $P$ and $Q$ is defined as 
   \begin{equation}
        \DivKL{P}{Q} \coloneqq 
        \begin{cases}
        \int \log\left(\frac{\D P}{\D Q}\left(\vct{x}\right)\right) \D P \left(\vct{x}\right) - \int \D P(\vct{x}) + \int \D Q(\vct{x}), \quad &\text{if $P$ has a density $\frac{\D P}{\D Q}$ with respect to $Q$,}\\ 
        \! \! \qquad \qquad \qquad \qquad \qquad \infty, &\text{else}.
        \end{cases}
   \end{equation}
   For two densities $\rho$, $\sigma$ with respect to a shared reference measure $R$, we abuse notation to write
   \begin{equation}
        \DivKL{\rho}{\sigma} \coloneqq \DivKL{\rho \D R}{\sigma\D R}
        = \int \log\left(\frac{\rho\left(\vct{x}\right)}{\sigma \left(\vct{x}\right)}\right) \rho\left(\vct{x}\right) \D R \left(\vct{x}\right)
           - \int \rho(\vct{x}) \D R(\vct{x}) 
           + \int \sigma(\vct{x}) \D R(\vct{x}).
   \end{equation}
\end{definition}

A key property of the KL divergence between densities is its invariance under changes of the reference measures, so long as the densities are adjusted to represent the same distribution.
This is a special case of the parametrization invariance of the KL divergence that uniquely characterizes it among all Bregman divergences \cite{amari2016information}.
We now show that the KL divergence interacts well with the transport equation.
\begin{lemma}
    \label{lem:kl_bound_aux}
    Let $\rho_t > 0$ be a weak solution of the transport equation, in the sense of \cref{eqn:transport_weak} and $\sigma_t$ a positive, time-dependent density (that need not satisfy a transport equation.) 
    Let $\sigma_t^{\mathrm{in}} > 0$ be any function on $\inbound_t$, that need not be related to $\sigma$ in any way.
    Then, the KL divergence of $\rho_t$ and $\sigma_t$ satisfies
    \begin{equation}
        \begin{aligned}
            \frac{\D}{\D t} \DivKL{\rho_t}{\sigma_t} =
            &- \int \limits_{\pdedomain} \frac{ \rho_t \left(\dot{\sigma}_t + \divergence\left(\sigma_t \vct{u}_t\right)\right)}{\sigma_t}\D \vct{x} 
            + \int \limits_{\inbound_t} \frac{\rho_t \sigma_t \vct{u}_t \cdot \vct{\nu}}{\sigma_t}\D \vct{x}
            - \int \limits_{\inbound_t} \frac{\rho_t\sigma^{\mathrm{in}}_t \vct{u}_t \cdot \vct{\nu} }{\sigma_t} \D \vct{x}\\
            & + \int \limits_{\pdedomain} \dot{\sigma}_t \D \vct{x} + \int \limits_{\outbound} \sigma_t \vct{u} \cdot \vct{\nu}_t \D \vct{x}  + \int \limits_{\inbound_t} \sigma_t^{\mathrm{in}} \vct{u}_t \cdot \vct{\nu} \D \vct{x} \\
            &+ \int \limits_{\outbound_t} \left(\log\left(\frac{\rho_t}{\sigma_t}\right) \rho_t  - \rho_t + \sigma_t\right) \left(-\vct{u}_t \cdot \vct{\nu}\right) \D \vct{x}\\
            &  + \int \limits_{\inbound_t} \left(\log\left(\frac{\rho_t}{\sigma_t}\right) \rho^{\mathrm{in}}_t           
            + \left(\frac{\rho_t}{\sigma_t} - \frac{\sigma_t}{\sigma_t}\right) \sigma_t^{\mathrm{in}}\right) \left(- \vct{u}_t \cdot \vct{\nu}\right)\D \vct{x}
        \end{aligned}
    \end{equation}
\end{lemma}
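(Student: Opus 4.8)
The plan is to differentiate $\DivKL{\rho_t}{\sigma_t}$ directly in $t$ and then substitute the weak transport equation \cref{eqn:transport_weak} for $\dot\rho_t$. Since $\pdedomain$ is fixed in time, I differentiate under the integral sign; writing the generalized KL integrand as $\rho_t\log(\rho_t/\sigma_t) - \rho_t + \sigma_t$ and applying the chain rule, the two bare $\pm\dot\rho_t$ contributions cancel, leaving
\begin{equation}
     \frac{\D}{\D t}\DivKL{\rho_t}{\sigma_t}
     = \int_{\pdedomain} \log\!\left(\frac{\rho_t}{\sigma_t}\right)\dot\rho_t \D \vct{x}
     + \int_{\pdedomain}\left(1 - \frac{\rho_t}{\sigma_t}\right)\dot\sigma_t \D \vct{x}.
\end{equation}
I keep the second integral, splitting it into the surviving bulk term $\int_{\pdedomain}\dot\sigma_t$ and the piece $-\int_{\pdedomain}(\rho_t/\sigma_t)\dot\sigma_t$, which I will merge with the transport contribution.

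First I would eliminate $\dot\rho_t$. Since $\rho_t,\sigma_t$ are positive, bounded away from zero, and Lipschitz, $\psi_t \coloneqq \log(\rho_t/\sigma_t)$ is an admissible Lipschitz test function, so testing \cref{eqn:transport_weak} against $\psi_t$ frozen at the current time gives
\begin{equation}
     \int_{\pdedomain}\log\!\left(\frac{\rho_t}{\sigma_t}\right)\dot\rho_t \D \vct{x}
     = \int_{\pdedomain}\left(\cnst{D}\log\!\left(\frac{\rho_t}{\sigma_t}\right)\right)\rho_t\vct{u}_t \D \vct{x}
     - \int_{\outbound_t}\log\!\left(\frac{\rho_t}{\sigma_t}\right)\rho_t\,\vct{u}_t\cdot\vct{\nu}\D \vct{x}
     - \int_{\inbound_t}\log\!\left(\frac{\rho_t}{\sigma_t}\right)\rho^{\mathrm{in}}_t\,\vct{u}_t\cdot\vct{\nu}\D \vct{x}.
\end{equation}
Expanding $\cnst{D}\log(\rho_t/\sigma_t) = \cnst{D}\rho_t/\rho_t - \cnst{D}\sigma_t/\sigma_t$, the interior integral becomes $\int_{\pdedomain}\cnst{D}\rho_t\,\vct{u}_t - \int_{\pdedomain}(\rho_t/\sigma_t)\cnst{D}\sigma_t\,\vct{u}_t$; integrating the first by parts and using $\divergence(\sigma_t\vct{u}_t) = \cnst{D}\sigma_t\,\vct{u}_t + \sigma_t\divergence\vct{u}_t$, these combine with $-\int_{\pdedomain}(\rho_t/\sigma_t)\dot\sigma_t$ to yield exactly the bulk term $-\int_{\pdedomain}\rho_t(\dot\sigma_t + \divergence(\sigma_t\vct{u}_t))/\sigma_t$ and leave behind the full-boundary flux $\int_{\partial\pdedomain}\rho_t\,\vct{u}_t\cdot\vct{\nu}$.

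Next comes the boundary bookkeeping. Splitting $\int_{\partial\pdedomain} = \int_{\outbound_t} + \int_{\inbound_t}$ and combining with the two flux integrals above produces the outflow term $\int_{\outbound_t}(\rho_t - \rho_t\log(\rho_t/\sigma_t))\,\vct{u}_t\cdot\vct{\nu}$ and the inflow term $\int_{\inbound_t}(\rho_t - \rho^{\mathrm{in}}_t\log(\rho_t/\sigma_t))\,\vct{u}_t\cdot\vct{\nu}$. The algebraic identity $\rho_t - \rho_t\log(\rho_t/\sigma_t) = \sigma_t - (\rho_t\log(\rho_t/\sigma_t) - \rho_t + \sigma_t)$ recovers the outflow lines of the statement. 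Because $\DivKL{\rho_t}{\sigma_t}$ does not depend on the auxiliary profile $\sigma^{\mathrm{in}}_t$, neither can its derivative; I may therefore add a combination of inflow integrals in $\sigma^{\mathrm{in}}_t$ that sums to zero in order to cast the inflow terms into the $\sigma^{\mathrm{in}}_t$-dependent form displayed in the lemma. This vanishing-addition freedom is exactly the role of the hypothesis that $\sigma^{\mathrm{in}}_t$ is an arbitrary positive function.

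I expect the main obstacle to be the renormalization step: justifying that the nonlinear, time-dependent function $\log(\rho_t/\sigma_t)$ is a legitimate test function in \cref{eqn:transport_weak}, and that the differentiation under the integral and the integration by parts are valid. This is precisely where positivity (both densities bounded away from zero) and the Lipschitz hypotheses enter; for less regular weak solutions one would instead need a DiPerna--Lions-type renormalization/commutator argument. Everything else is careful tracking of inflow versus outflow traces, together with the observation that the $\sigma^{\mathrm{in}}_t$-terms form a vanishing combination.
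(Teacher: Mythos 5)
Your derivation follows essentially the same route as the paper's own proof: differentiate the generalized KL integrand (the bare $\pm\dot\rho_t$ contributions cancel), substitute the weak form \cref{eqn:transport_weak} with the time-frozen test function $\log(\rho_t/\sigma_t)$, expand and integrate by parts to obtain the bulk residual plus the full-boundary flux, then perform inflow/outflow bookkeeping by adding and subtracting boundary terms. Your intermediate identity --- the bulk term plus $\int_{\outbound_t}\left(\rho_t - \rho_t\log\left(\rho_t/\sigma_t\right)\right)\vct{u}_t\cdot\vct{\nu}\D\vct{x}$ plus $\int_{\inbound_t}\left(\rho_t - \rho^{\mathrm{in}}_t\log\left(\rho_t/\sigma_t\right)\right)\vct{u}_t\cdot\vct{\nu}\D\vct{x}$ --- coincides exactly with the paper's penultimate display.

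One caveat, and it concerns precisely the step you left schematic. You argue that since the left-hand side is independent of $\sigma^{\mathrm{in}}_t$, you may insert any vanishing combination of $\sigma^{\mathrm{in}}_t$-integrals to reach ``the form displayed in the lemma.'' The principle is sound, but if you actually carry it out against the statement as printed, it fails: collecting the $\sigma^{\mathrm{in}}_t$-terms of the displayed right-hand side (from its first, second, and fourth lines) gives
\begin{equation*}
    2\int\limits_{\inbound_t}\left(1 - \frac{\rho_t}{\sigma_t}\right)\sigma^{\mathrm{in}}_t\,\vct{u}_t\cdot\vct{\nu}\D\vct{x},
\end{equation*}
which does not vanish in general. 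The paper's own proof terminates with the opposite sign on the $\left(\rho_t/\sigma_t - \sigma_t/\sigma_t\right)\sigma^{\mathrm{in}}_t$ term in the fourth line, and under that sign the $\sigma^{\mathrm{in}}_t$-contributions do cancel, making the right-hand side equal to what you (and the paper) derived. In other words, your proof is correct and reproduces the result established in the paper's proof; the displayed lemma carries a sign typo in its fourth line, and the consistency check you invoked --- but did not run to completion --- is exactly the tool that detects it.
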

In the above, the first two lines of the right-hand side are residuals of plugging $\sigma_t$ into \cref{eqn:transport_weak}, with inflow given by $\sigma_t^{\mathrm{in}}$, for $\te \equiv \rho_t / \sigma_t$ and $\te \equiv 1$ respectively.
The third accounts for the KL divergence of the flow leaving the domain along the outflow boundary and the fourth for the change in KL divergence due to inflow.
\begin{proof}
    We compute the time derivative as 
\begin{align}
    & \frac{\D}{\D t} \DivKL{\rho_t}{\sigma_t} = \frac{\D}{\D t} \left(\int \limits_{\pdedomain} \log\left(\frac{\rho_t}{\sigma_t}\right) \rho_t \D \vct{x} 
    - \int \limits_{\pdedomain} \rho_t \D \vct{x} 
    + \int \limits_{\pdedomain} \sigma_t \D \vct{x}
    \right)\\
    &\phantom{\frac{\D}{\D t} \DivKL{\rho_t}{\sigma_t}}
    = \int \limits_{\pdedomain} \left(\frac{\dot{\rho}_t}{\rho_t} - \frac{\dot{\sigma}_t}{\sigma_t}\right) \rho_t \D \vct{x}
    + \int \limits_{\pdedomain} \log\left(\frac{\rho_t}{\sigma_t}\right)  \dot{\rho}_t \D \vct{x} - \int \limits_{\pdedomain} \dot{\rho}_t \D \vct{x} 
    + \int \limits_{\pdedomain} \dot{\sigma}_t \D \vct{x}\\
    &\phantom{\frac{\D}{\D t} \DivKL{\rho_t}{\sigma_t}}
    = - \int \limits_{\pdedomain}  \frac{\dot{\sigma}_t}{\sigma_t} \rho_t \D \vct{x}
    + \int \limits_{\pdedomain} \log\left(\frac{\rho_t}{\sigma_t}\right)  \dot{\rho}_t \D \vct{x} + \int \limits_{\pdedomain} \dot{\sigma}_t \D \vct{x}\\
    &
    \begin{aligned}
    \phantom{\frac{\D}{\D t} \DivKL{\rho_t}{\sigma_t}}
    = &- \int \limits_{\pdedomain}  \frac{\dot{\sigma}_t}{\sigma_t} \rho_t \D \vct{x}
    + \int \limits_{\pdedomain} \left(\frac{\cnst{D} \rho_t}{\rho_t} - \frac{\cnst{D}\sigma_t}{\sigma_t}\right)  \rho_t \vct{u}_t \D \vct{x} 
    + \int \limits_{\pdedomain} \dot{\sigma}_t \D \vct{x} \\
    &- \int \limits_{\outbound_t} \log\left(\frac{\rho_t}{\sigma_t}\right) \rho_t \vct{u}_t \cdot \vct{\nu} \D \vct{x}
    - \int \limits_{\inbound_t} \log\left(\frac{\rho_t}{\sigma_t}\right) \rho_t^{\mathrm{in}} \vct{u}_t \cdot \vct{\nu} \D \vct{x}
    \end{aligned}\\
    &
    \begin{aligned}
    \phantom{\frac{\D}{\D t} \DivKL{\rho_t}{\sigma_t}}
    = &- \int \limits_{\pdedomain}  \frac{\dot{\sigma}_t}{\sigma_t} \rho_t \D \vct{x}
    + \int \limits_{\pdedomain} \left(\frac{\divergence\left( \rho_t \vct{u}_t\right)}{\rho_t} - \frac{\divergence\left(\sigma_t \vct{u}_t\right)}{\sigma_t}\right)  \rho_t \D \vct{x} 
    + \int \limits_{\pdedomain} \dot{\sigma}_t \D \vct{x} \\
    &- \int \limits_{\outbound_t} \log\left(\frac{\rho_t}{\sigma_t}\right) \rho_t \vct{u}_t \cdot \vct{\nu} \D \vct{x}
    - \int \limits_{\inbound_t} \log\left(\frac{\rho_t}{\sigma_t}\right) \rho_t^{\mathrm{in}} \vct{u}_t \cdot \vct{\nu} \D \vct{x}
    \end{aligned}\\
    &
    \begin{aligned}
    \phantom{\frac{\D}{\D t} \DivKL{\rho_t}{\sigma_t}}
    = &- \int \limits_{\pdedomain}  \frac{\dot{\sigma}_t}{\sigma_t} \rho_t \D \vct{x}
    - \int \limits_{\pdedomain} \frac{\divergence\left(\sigma_t \vct{u}_t\right)}{\sigma_t}  \rho_t \D \vct{x} 
    + \int \limits_{\partial \pdedomain} \rho_t \vct{u}_t \cdot \vct{\nu} \D \vct{x} 
    + \int \limits_{\pdedomain} \dot{\sigma}_t \D \vct{x} \\
    &- \int \limits_{\outbound_t} \log\left(\frac{\rho_t}{\sigma_t}\right) \rho_t \vct{u}_t \cdot \vct{\nu} \D \vct{x}
    - \int \limits_{\inbound_t} \log\left(\frac{\rho_t}{\sigma_t}\right) \rho_t^{\mathrm{in}} \vct{u}_t \cdot \vct{\nu} \D \vct{x}
    \end{aligned}\\
    &
    \begin{aligned}
    \phantom{\frac{\D}{\D t} \DivKL{\rho_t}{\sigma_t}}
    = &- \int \limits_{\pdedomain}  \frac{\rho_t \left( \dot{\sigma}_t + \divergence\left(\sigma_t \vct{u}_t\right)\right)}{\sigma_t} \D \vct{x} + \int \limits_{\inbound_t} \frac{\rho_t \sigma_t \vct{u}_t \cdot \vct{\nu}}{\sigma_t}\D \vct{x}
    - \int \limits_{\inbound_t} \frac{\rho_t\sigma^{\mathrm{in}}_t \vct{u}_t \cdot \vct{\nu} }{\sigma_t} \D \vct{x}\\ 
    &+ \int \limits_{\pdedomain} \dot{\sigma}_t \D \vct{x} + \int \limits_{\outbound} \sigma_t \vct{u} \cdot \vct{\nu}_t \D \vct{x}  + \int \limits_{\inbound_t} \sigma_t^{\mathrm{in}} \vct{u}_t \cdot \vct{\nu} \D \vct{x} \\
    & - \int \limits_{\outbound_t} \log\left(\frac{\rho_t}{\sigma_t}\right) \rho_t \vct{u}_t \cdot \vct{\nu} \D \vct{x}
    + \int \limits_{\outbound_t} \rho_t \vct{u}_t \cdot \vct{\nu} \D \vct{x} 
    - \int \limits_{\outbound} \sigma_t \vct{u} \cdot \vct{\nu}_t \D \vct{x}  \\
    &    - \int \limits_{\inbound_t} \log\left(\frac{\rho_t}{\sigma_t}\right) \rho_t^{\mathrm{in}} \vct{u}_t \cdot \vct{\nu} \D \vct{x}
    + \int \limits_{\inbound_t} \frac{\rho_t\sigma_t^{\mathrm{in}} \vct{u}_t \cdot \vct{\nu} }{\sigma_t} \D \vct{x}- \int \limits_{\inbound_t} \sigma_t^{\mathrm{in}} \vct{u}_t \cdot \vct{\nu} \D \vct{x}.
    \end{aligned}
\end{align}
In the last step above we have added and subtracted the missing boundary terms to complete the residuals in the first and second line of the right-hand side.
By reordering the terms in each line, we obtain the result.
\end{proof}
We will use this lemma to bound the rate of divergence of (approximate) solutions of the transport equation.
We first consider the case where $\sigma_t$ satisfies the same transport equation (although with possibly different initial condition).

\subsection*{KL non-divergence}
A crucial property of the KL-divergence applied to transport equations, is that it is constant under the action of the transport equation, up to in/outflow terms.
This is a well-known result in the theory of continuous Markov processes.
It is also closely related to the so-called H-theorem \cite{morimoto1963markov}.

\begin{corollary}
    Denote as $\rho_t$ and $\sigma_t$ the solutions of the transport equation \cref{eqn:transport_weak} with different initial conditions and the same $\vct{u}, \rho_{t}^{\mathrm{in}}$.
    Then, the KL divergence $\DivKL{\rho_t}{\sigma_t}$ satisfies
    \begin{equation}
    \begin{aligned}
        \frac{\D}{\D t} \DivKL{\rho_t}{\sigma_t} 
        = &\int \limits_{\inbound_t} \frac{\D}{\D s} \left.\log\left(\frac{\rho_t + s}{\sigma_t + s}\right) \left(\rho_t + s\right)\right|_{s = 0} \left(-\rho_t^{\mathrm{in}} \vct{u}_t \cdot \vct{\nu}\right) \D \vct{x}\\
            + &\int \limits_{\outbound_t} \left(\log\left(\frac{\rho_t}{\sigma_t}\right) \rho_t  - \rho_t + \sigma_t\right) \left(-\vct{u}_t \cdot \vct{\nu}\right) \D \vct{x}.
    \end{aligned}
    \end{equation}
\end{corollary}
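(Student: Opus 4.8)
The plan is to obtain this corollary as a direct specialization of \cref{lem:kl_bound_aux}. Since $\sigma_t$ solves the transport equation with the same velocity $\vct{u}_t$ and inflow $\rho_t^{\mathrm{in}}$ as $\rho_t$, I would apply the lemma with the free inflow function chosen as $\sigma_t^{\mathrm{in}} \coloneqq \rho_t^{\mathrm{in}}$. The strategy is then to argue that the first two lines of the right-hand side of the lemma --- which the remark following the lemma identifies as the weak transport residuals of $\sigma_t$ tested against $\te \equiv \rho_t/\sigma_t$ and $\te \equiv 1$ respectively --- vanish identically, leaving only the two boundary contributions of lines three and four, which I must then match to the two terms in the corollary.

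First I would verify that the residual terms are zero. By hypothesis $\sigma_t$ satisfies \cref{eqn:transport_weak} with inflow $\rho_t^{\mathrm{in}}$, so its residual against any admissible test function vanishes. Here the two test functions are $\te \equiv 1$, which is trivially admissible, and $\te \equiv \rho_t/\sigma_t$; the latter is well-defined and sufficiently regular precisely because $\rho_t,\sigma_t > 0$ and both inherit the Lipschitz regularity assumed throughout, so it is an admissible test function and its residual vanishes as well. This eliminates the interior/inflow residual weighted by $\rho_t/\sigma_t$ and the mass residual weighted by $1$.

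Next I would identify the surviving terms. Line three of the lemma is already exactly the outflow integral $\int_{\outbound_t}\left(\log(\rho_t/\sigma_t)\rho_t - \rho_t + \sigma_t\right)(-\vct{u}_t\cdot\vct{\nu})\D\vct{x}$ of the corollary, so no work is needed there. For line four I would substitute $\sigma_t^{\mathrm{in}} = \rho_t^{\mathrm{in}}$ and then recognize the integrand as a total $s$-derivative via the one-line computation
\[
\frac{\D}{\D s}\left.\log\!\left(\frac{\rho_t+s}{\sigma_t+s}\right)(\rho_t+s)\right|_{s=0} = \log\!\left(\frac{\rho_t}{\sigma_t}\right) + 1 - \frac{\rho_t}{\sigma_t},
\]
after which multiplying by $(-\rho_t^{\mathrm{in}}\vct{u}_t\cdot\vct{\nu})$ reproduces the inflow term of the corollary.

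The main obstacle I anticipate is bookkeeping rather than conceptual: one must track signs carefully when collapsing the grouping $\tfrac{\rho_t}{\sigma_t} - \tfrac{\sigma_t}{\sigma_t}$ in line four of the lemma into the compact derivative form above. In particular, the identification succeeds only if the inflow integrand reduces to $1 - \rho_t/\sigma_t$ rather than $\rho_t/\sigma_t - 1$, so before trusting the match I would retrace the signs of the inflow boundary terms produced in the lemma's proof to confirm consistency with its displayed statement. A secondary point deserving a sentence is the admissibility of $\rho_t/\sigma_t$ as a test function --- using positivity of $\sigma_t$ and the assumed Lipschitz regularity --- since the entire reduction hinges on that residual vanishing.
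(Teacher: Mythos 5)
Your proposal is correct and follows essentially the same route as the paper's one-line proof: apply \cref{lem:kl_bound_aux} with $\sigma_t^{\mathrm{in}} \coloneqq \rho_t^{\mathrm{in}}$, use that $\sigma_t$ satisfies the weak transport equation so the two residual lines vanish, and identify the remaining boundary lines with the corollary's two terms. Your sign caution is well-founded and resolves in the corollary's favor: the final display in the lemma's own proof produces $\left(1 - \rho_t/\sigma_t\right)\sigma_t^{\mathrm{in}}$ on the inflow boundary (the lemma's displayed statement has that term's sign flipped), so with the proof's sign your computation $\log\left(\rho_t/\sigma_t\right) + 1 - \rho_t/\sigma_t$ matches the corollary's derivative form exactly.
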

\begin{proof}
    The result follows directly by combining \cref{eqn:transport_weak} (with $\rho_t$ replaced by $\sigma_t$) and \cref{lem:kl_bound_aux}.
\end{proof}
The above lemma signifies that the KL divergence is constant, up to effects due to in/outflow.
The first term amounts to the change of KL divergence on the inflow boundary, due to the inflowing mass.
The second and third term account for the KL divergence leaving the domain along the outflow boundary.
Note that a similar property is \emph{not} true for any norm equivalent to the $L^2$ norm, unless $\vct{u}$ is divergence-free.
An analog property does hold for the $L^1$ norm.
However, the $L^1$ norm is not associated to an inner product.
The key property of the KL divergence is that it is invariant under the transport equation's evolution operators while possessing an inner product structure that allows defining Galerkin projection methods.

\subsection{Error bounds of continuous Fisher-Rao Galerkin, in KL divergence}

\subsubsection*{Continuous Galerkin approximation}
In the presence of Galerkin approximation, the constancy of the KL divergence can not hold exactly.
But as we show next, the increase in KL divergence between the true and Fisher-Rao Galerkin solution is given by the Fisher-Rao inner product betwen the PDE residual and the current approximation error.

\begin{theorem}
    \label{thm:FRG_KL_bound}
    Let $\rho_t$ satisfy the transport equation \cref{eqn:transport_weak} with $(\vct{u}_t \equiv 0)$ on $\partial \pdedomain$ and let $\trf_t$ be the continuous Fisher-Rao Galerkin approximation of $\rho_t$.
    Then, if $\trf$ is positive, the KL divergence $\DivKL{\rho_t}{\trf_t}$ satisfies
    \begin{equation}
        \frac{\D}{\D t} \DivKL{\rho_t}{\trf_t} 
        = \inf \limits_{\tef \in \trs}\left \langle \dot{\trf}_t + \divergence\left(\trf_t \vct{u}_t\right), \rho_t - \tef \right \rangle_{\trf_t}
        \leq \left\|\dot{\trf}_t + \divergence\left(\trf_t \vct{u}_t\right)\right\|_{\trf_t} \left\|\rho_t - \hat{\trf}_t \right\|_{\trf_t}, 
    \end{equation}
    where $\hat{\trf}_t$ denotes the $\trf_t$-Fisher-Rao projection of $\rho_t$, onto $\trs$.
\end{theorem}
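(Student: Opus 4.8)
The plan is to specialize the master identity of \cref{lem:kl_bound_aux} to the choice $\sigma_t = \trf_t$ and then read off the claimed inner-product structure. First I would invoke \cref{lem:kl_bound_aux}, which already expresses $\frac{\D}{\D t}\DivKL{\rho_t}{\trf_t}$ for an arbitrary positive comparison density. Because the hypothesis forces $\vct{u}_t \equiv 0$ on $\partial\pdedomain$, every boundary integral appearing in that identity carries a factor $\vct{u}_t \cdot \vct{\nu} = 0$ and therefore vanishes (equivalently, the relevant integrands over $\inbound_t$ and $\outbound_t$ are null). What survives is only the interior residual term together with the mass term,
\begin{equation*}
\frac{\D}{\D t} \DivKL{\rho_t}{\trf_t} = - \int \limits_{\pdedomain} \frac{\rho_t \left(\dot{\trf}_t + \divergence\left(\trf_t \vct{u}_t\right)\right)}{\trf_t} \D \vct{x} + \int \limits_{\pdedomain} \dot{\trf}_t \D \vct{x}.
\end{equation*}
The positivity of $\trf_t$ assumed in the statement is exactly what makes this expression, and the Fisher-Rao inner product below, well defined.

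Next I would recognize the Fisher-Rao inner product $\left\langle p, q\right\rangle_{\trf_t} = \int_{\pdedomain} pq/\trf_t \D \vct{x}$ in both surviving terms. The first integral is precisely $-\left\langle \dot{\trf}_t + \divergence\left(\trf_t \vct{u}_t\right), \rho_t\right\rangle_{\trf_t}$. For the mass term I would write $\int_{\pdedomain} \dot{\trf}_t = \int_{\pdedomain}\left(\dot{\trf}_t + \divergence\left(\trf_t \vct{u}_t\right)\right) - \int_{\partial\pdedomain} \trf_t \vct{u}_t \cdot \vct{\nu} = \left\langle \dot{\trf}_t + \divergence\left(\trf_t \vct{u}_t\right), \trf_t\right\rangle_{\trf_t}$, the boundary contribution again vanishing. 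Combining the two gives $\frac{\D}{\D t}\DivKL{\rho_t}{\trf_t} = \left\langle \dot{\trf}_t + \divergence\left(\trf_t \vct{u}_t\right), \trf_t - \rho_t\right\rangle_{\trf_t}$, so the rate of KL growth is the PDE residual paired against the (signed) approximation error.

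The decisive step is Galerkin orthogonality. Reading \cref{eqn:FRgalerkin_transport} with $\vct{u}_t \equiv 0$ on the boundary shows exactly that $\left\langle \tef, \dot{\trf}_t + \divergence\left(\trf_t \vct{u}_t\right)\right\rangle_{\trf_t} = 0$ for every $\tef \in \trs$; since $\trf_t \in \trs$, this is also what collapsed the mass term above. Consequently the residual pairing is unchanged if I replace $\trf_t$ by an arbitrary $\tef \in \trs$, which produces the $\inf_{\tef \in \trs}$ formulation of the statement (the overall sign being the only bookkeeping subtlety to track). Finally I would apply the Cauchy-Schwarz inequality in $\left\langle \cdot, \cdot\right\rangle_{\trf_t}$ and select the particular $\tef = \hat{\trf}_t$, the Fisher-Rao projection of $\rho_t$ onto $\trs$, which minimizes $\|\rho_t - \tef\|_{\trf_t}$ and hence yields the sharpest bound $\|\dot{\trf}_t + \divergence\left(\trf_t \vct{u}_t\right)\|_{\trf_t}\,\|\rho_t - \hat{\trf}_t\|_{\trf_t}$; because Cauchy-Schwarz bounds the absolute value, the sign of the pairing is immaterial for this final estimate.

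I expect the main obstacle to be careful bookkeeping rather than any deep analytic estimate: verifying that all boundary terms of \cref{lem:kl_bound_aux} genuinely vanish under $\vct{u}_t \equiv 0$, and handling the mass term $\int_{\pdedomain}\dot{\trf}_t$ by recasting it as the residual tested against $\trf_t$ (equivalently, invoking the conservation property), so that the two interior terms fuse into a single Fisher-Rao pairing of residual with error. The orthogonality and Cauchy-Schwarz steps are then routine, and it is precisely the positivity hypothesis on $\trf_t$ that legitimizes the whole chain -- definiteness of $\left\langle \cdot, \cdot\right\rangle_{\trf_t}$, finiteness and differentiability of $\DivKL{\rho_t}{\trf_t}$, and existence of the projection $\hat{\trf}_t$.
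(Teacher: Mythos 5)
Your proposal is correct and is essentially the paper's own proof, which simply cites \cref{lem:kl_bound_aux} together with the Fisher-Rao Galerkin relation \cref{eqn:FRgalerkin_transport}: you specialize the lemma to $\sigma_t = \trf_t$, use $\vct{u}_t \equiv 0$ on $\partial\pdedomain$ to annihilate every boundary term, read the FRG relation as Galerkin orthogonality of the residual to $\trs$ (which also kills the mass term), and finish with Cauchy--Schwarz. The sign subtlety you flag is genuine --- the lemma actually produces $\left\langle \dot{\trf}_t + \divergence\left(\trf_t\vct{u}_t\right), \tef - \rho_t\right\rangle_{\trf_t}$, the negative of the pairing printed in the theorem --- but, as you observe, Cauchy--Schwarz bounds the absolute value, so the stated inequality is unaffected.
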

\begin{proof}
This follows from 
\cref{lem:kl_bound_aux} and the continuous Fisher-Rao Galerkin property \cref{eqn:FRgalerkin_transport}.
\end{proof}
\Cref{thm:FRG_KL_bound} shows that the KL divergence between the true solution $\rho_t$ and its Fisher-Rao Galerkin approximation $\trf_t$ grows slowly, if either the residual is small in Fisher-Rao norm or the true solution is (in Fisher-Rao norm) close to the approximation space.
Crucially, $\rho_t$ and $\trf_t$ need not be close for this to hold.

\subsubsection*{Discontinuous Galerkin approximation}
An analog result holds for the discontinuous variant.
\begin{theorem}
    \label{lem:thm:DFRG_KL_bound}
    Let $\rho_t > 0$ be a Lipschitz solution of the transport equation \cref{eqn:transport_weak} with $(\vct{u}_t \equiv 0)$ on $\partial \pdedomain$ and let $\trf_t > 0$ be the discontinuous Fisher-Rao Galerkin approximation of $\rho_t$.
    Denote as $\mathbb{K}^\circ_t$ the set of interfaces between two subdomains $\omega_k, \omega_l$ along which $\vct{u}_t$ points exclusively either from $\omega_k$ to $\omega_l$ or from $\omega_l$ to $\omega_k$.
    When integrating over $\Delta \in \mathbb{K}^\circ_t$, we denote as $\rho^+_t, \trf^+_t$ the interface values on the downstream element and as $\rho^-_t, \trf^-_t$ the interface values on the upstream element.
    Then, the KL divergence $\DivKL{\rho_t}{\trf_t}$ satisfies
    \begin{equation}
        \begin{aligned}
            \frac{\D}{\D t} \DivKL{\rho_t}{\trf_t} = \inf \limits_{\tef \in \trs}&
            \phantom{+} \sum \limits_{k} \left(- \int \limits_{\omega_k} \frac{\left(\rho_t - \tef\right) \dot{\trf}_t }{\trf_t}\D \vct{x} 
            + \int \limits_{\omega_k} \cnst{D}\left(\frac{ \left(\rho_t - \tef\right)}{\trf_t}\right) \trf_t \vct{u}_t\D \vct{x} 
            - \int \limits_{\partial \omega_k} \frac{\left(\rho_t - \tef\right)f_{\mathrm{up}}^{\omega_k, \trf_t, \vct{u}_t}}{\trf_t}\D \vct{x}\right)\\
            & + \sum \limits_{\Delta \in \mathbb{K}^{\circ}_{t}} \underbrace{\int \limits_{\Delta} \left(\log\left(\frac{\trf^+_t}{\trf_t^-}\right) - \frac{\trf^-_t}{\trf_t^+}  + 1 \right) \rho_t \left|\vct{u}_t \cdot \vct{\nu}\right| \D \vct{x}.}_{\frac{\D}{\D s} \left. \bigDivKL{\left.\trf_t^-\right|_{\Delta} - s\rho_t \left|\vct{u}_t \cdot \vct{\nu}\right|}{\left.\trf_t^+\right|_{\Delta} + s\rho_t \left|\vct{u}_t \cdot \vct{\nu}\right|} \right|_{s = 0}}
        \end{aligned}
    \end{equation}
\end{theorem}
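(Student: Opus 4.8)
The plan is to run the proof of \cref{thm:FRG_KL_bound} cell by cell. Since the generalized KL divergence is additive over the partition, $\DivKL{\rho_t}{\trf_t} = \sum_k \DivKL{\rho_t|_{\omega_k}}{\trf_t|_{\omega_k}}$, it suffices to differentiate each local divergence and sum. On a fixed cell $\omega$, the restriction $\rho_t|_\omega$ is a Lipschitz solution of the transport equation on $\omega$ whose inflow trace is, by continuity of $\rho_t$, single-valued. I would therefore apply \cref{lem:kl_bound_aux} on $\omega$ with comparison density $\sigma_t = \trf_t|_\omega$, inflow datum $\rho^{\mathrm{in}}_t = \rho_t$, and --- the key modelling choice --- the comparison inflow $\sigma^{\mathrm{in}}_t$ set equal to the \emph{upstream} numerical trace $\trf_t^-$, i.e.\ the value selected by the upwind flux $f_{\mathrm{up}}^{\omega,\trf_t,\vct{u}_t}$. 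Summing the resulting identities over all cells then produces four families of terms: the interior ``residual'' integrals, a ``$\te\equiv 1$'' mass-balance line, and the inflow and outflow KL-boundary lines.

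The first step is to collapse the interior residual line into the per-cell DG residual in the statement. Integrating by parts the divergence in $-\int_\omega \frac{\rho_t(\dot{\trf}_t + \divergence(\trf_t\vct{u}_t))}{\trf_t}\,\D\vct{x}$ converts it into $-\int_\omega \frac{\rho_t\dot{\trf}_t}{\trf_t}\,\D\vct{x} + \int_\omega \cnst{D}\!\left(\frac{\rho_t}{\trf_t}\right)\trf_t\vct{u}_t\,\D\vct{x}$ plus a boundary integral; because $\sigma^{\mathrm{in}}_t = \trf_t^-$ was chosen to match the upwind value, this boundary integral combines with the inflow residual correction of \cref{lem:kl_bound_aux} to yield exactly $-\int_{\partial\omega}\frac{\rho_t f_{\mathrm{up}}^{\omega,\trf_t,\vct{u}_t}}{\trf_t}\,\D\vct{x}$. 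Next, the ``$\te\equiv 1$'' mass-balance line $\int_\omega\dot{\trf}_t\,\D\vct{x}$ together with its boundary fluxes is eliminated using the local conservation identity obtained by testing \cref{eqn:DFRgalerkin_transport} with $\tef = \trf_t|_\omega \in \trs$, namely $\int_\omega \dot{\trf}_t\,\D\vct{x} = -\int_{\partial\omega} f_{\mathrm{up}}^{\omega,\trf_t,\vct{u}_t}\,\D\vct{x}$; this line then cancels identically.

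The heart of the argument is the interface assembly of the two KL-boundary lines. Every interior face $\Delta$ is the outflow face of its upstream cell and the inflow face of its downstream cell, and restricting the accounting to $\mathbb{K}^\circ_t$ guarantees that this labeling is unambiguous along all of $\Delta$ (faces where $\vct{u}_t\cdot\vct{\nu}$ changes sign are excluded, and those with $\vct{u}_t\cdot\vct{\nu}\equiv 0$ contribute nothing). Collecting on a single $\Delta$ the outflow-KL contribution of the upstream cell (which sees local value $\trf_t^-$) and the inflow-KL contribution of the downstream cell (which sees local value $\trf_t^+$ and inflow value $\trf_t^-$), and using that $\rho_t$ is single-valued across $\Delta$ together with $\vct{u}_t\cdot\vct{\nu} = \pm\abs{\vct{u}_t\cdot\vct{\nu}}$, the logarithmic contributions from the two sides combine into a single $\log(\trf_t^+/\trf_t^-)$ term while the remaining algebraic terms consolidate into the jump functional displayed in the statement. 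Its identification with the $s$-derivative at $s=0$ of $\bigDivKL{\left.\trf_t^-\right|_\Delta - s\rho_t\abs{\vct{u}_t\cdot\vct{\nu}}}{\left.\trf_t^+\right|_\Delta + s\rho_t\abs{\vct{u}_t\cdot\vct{\nu}}}$ is then a one-line differentiation of the KL definition, with $P(0)=\trf_t^-$, $Q(0)=\trf_t^+$, $P'=-\rho_t\abs{\vct{u}_t\cdot\vct{\nu}}$, and $Q'=\rho_t\abs{\vct{u}_t\cdot\vct{\nu}}$.

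Finally, I obtain the $\inf_{\tef\in\trs}$ form exactly as in the continuous case: by \cref{eqn:DFRgalerkin_transport} the per-cell DG residual tested against any $\tef\in\trs$ vanishes, so replacing $\rho_t$ by $\rho_t - \tef$ inside each per-cell residual leaves the sum unchanged; the whole expression is therefore independent of $\tef$ and coincides with its own infimum, while the interface sum over $\mathbb{K}^\circ_t$ depends only on $\rho_t$ and $\trf_t$ and is unaffected. I expect the genuine difficulty to lie entirely in the interface assembly: tracking each outward normal and the sign of $\vct{u}_t\cdot\vct{\nu}$, correctly pairing the two adjacent cells' contributions to each face, and confirming that once the upwind-flux residual has been split off, everything else cancels except the claimed jump divergence. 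The continuity of $\rho_t$ across faces is what makes this cancellation possible, and the restriction to $\mathbb{K}^\circ_t$ is what keeps the upstream/downstream designation --- and hence the $\trf_t^\pm$ labels --- well defined.
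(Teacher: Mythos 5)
Your proposal is correct and takes essentially the same route as the paper's proof: cell-wise application of \cref{lem:kl_bound_aux} with the upwind trace playing the role of $\sigma_t^{\mathrm{in}}$, elimination of the mass-balance line and the replacement $\rho_t \to \rho_t - \tef$ via the DFRG equation \cref{eqn:DFRgalerkin_transport}, interface assembly over $\mathbb{K}^\circ_t$ using the continuity of $\rho_t$, and the one-line identification of the jump term as the $s$-derivative of a KL divergence. The only differences are cosmetic: the paper packages the inflow data into the abused notation $f_{\mathrm{up}}^{\omega_k,\rho_t,\vct{u}_t}$, and it explicitly invokes $\vct{u}_t \equiv 0$ on $\partial\pdedomain$ to remove the exterior boundary terms, a step you should state rather than leave implicit.
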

Note that as in the continuous case, the first sum is small if either the true solution is well-represented by the Galerkin approximation or the PDE residual is small, in Fisher-Rao norm.
The additional, second term is small so long the the DG solution does not feature upward jumps when following the flow direction.
\begin{proof}
    We apply \cref{lem:kl_bound_aux} and use the definition of $f_{\mathrm{up}}$ to obtain
    \begin{align*}
        &\frac{\D}{\D t} \DivKL{\rho_t}{\trf_t} 
        = \sum \limits_{k} \frac{\D}{\D t} \bigDivKL{\left.\rho_t\right|_{\omega_k}}{\left.\trf_t\right|_{\omega_k}} \\
        &\phantom{\frac{\D}{\D t} \DivKL{\rho_t}{\trf_t}} =
        \sum \limits_{k}
        \left(
        \begin{aligned}
            &- \int \limits_{\omega_k} \frac{ \rho_t \dot{\trf}_t }{\trf_t}\D \vct{x} 
            + \int \limits_{\omega_k} \cnst{D}\left(\frac{ \rho_t}{\trf_t}\right) \trf_t \vct{u}_t\D \vct{x} 
            - \int \limits_{\partial \omega_k} \frac{\rho_t f_{\mathrm{up}}^{\omega_k, \trf_t, \vct{u}_t}}{\trf_t}\D \vct{x}+ \int \limits_{\omega_k} \dot{\trf}_t \D \vct{x} \\
            & \\
            &+ \int \limits_{\partial \omega_k} f_{\mathrm{up}}^{\omega_k, \trf_t, \vct{u}_t} \D \vct{x}+ \int \limits_{\partial \omega_k} \log\left(\frac{\rho_t}{\trf_t}\right) f_{\mathrm{up}}^{\omega_k, \rho_t, \vct{u}_t}  \D \vct{x}
              + \int \limits_{\partial \omega_k} \left(\frac{\rho_t}{\trf_t} - \frac{\trf_t}{\trf_t} \right) f_{\mathrm{up}}^{\omega_k, \trf_t, \vct{u}_t}\D \vct{x}
        \end{aligned}\right).
    \end{align*}
    Here, we abuse notation slightly and write $f_{\mathrm{up}}^{\omega_k, \trf_t, \vct{u}_t}, f_{\mathrm{up}}^{\omega_k, \rho_t, \vct{u}_t} \coloneqq \rho^{\mathrm{in}}_t \vct{u}_t \cdot \vct{\nu}$ on $\partial \omega_k \cap \inbound_t$.
    Using the fact that $\trf$ is a discontinuous Fisher-Rao Galerkin solution, we obtain for any $\tef \in \trs$, 
    \begin{equation}
        \label{eqn:dg_kl_bound_aux} 
        \frac{\D}{\D t} \DivKL{\rho_t}{\trf_t} 
        = \sum \limits_{k}
        \left(
        \begin{aligned}
            &- \int \limits_{\omega_k} \frac{\left(\rho_t - \tef\right) \dot{\trf}_t }{\trf_t}\D \vct{x} 
            + \int \limits_{\omega_k} \cnst{D}\left(\frac{ \left(\rho_t - \tef\right)}{\trf_t}\right) \trf_t \vct{u}_t\D \vct{x} 
            - \int \limits_{\partial \omega_k} \frac{\left(\rho_t - \tef\right)f_{\mathrm{up}}^{\omega_k, \trf_t, \vct{u}_t}}{\trf_t}\D \vct{x}\\
            &+ \int \limits_{\partial \omega_k} \log\left(\frac{\rho_t}{\trf_t}\right) f_{\mathrm{up}}^{\omega_k, \rho_t, \vct{u}_t}  \D \vct{x}
            + \int \limits_{\partial \omega_k} \left(\frac{\rho_t}{\trf_t} - \frac{\trf_t}{\trf_t} \right) f_{\mathrm{up}}^{\omega_k, \trf_t, \vct{u}_t}\D \vct{x}
        \end{aligned}\right)
    \end{equation}
    As noted above, we denote as $\mathbb{K}^\circ_t$ the set of interfaces between two subdomains $\omega_k, \omega_l$ along which $\vct{u}_t$ points exclusively either from $\omega_k$ to $\omega_l$ or from $\omega_l$ to $\omega_k$.
    When integrating over $\Delta \in \mathbb{K}^{\circ}_t $, we denote as $\rho^+_t, \trf^+_t$ the interface values on the downstream element and as $\rho^-_t, \trf^-_t$ those on the upstream element, resulting in
    \begin{equation*}
        \begin{aligned}
            &\sum \limits_{k} \left(\int \limits_{\partial \omega_k} \log\left(\frac{\rho_t}{\trf_t}\right) f_{\mathrm{up}}^{\omega_k, \rho_t, \vct{u}_t}  \D \vct{x}
            + \int \limits_{\partial \omega_k} \left(\frac{\rho_t}{\trf_t} - \frac{\trf_t}{\trf_t} \right) f_{\mathrm{up}}^{\omega_k, \trf_t, \vct{u}_t}\D \vct{x}\right) \\
            =&\sum \limits_{\Delta \in \mathbb{K}^{\circ}_{t}} \left(\int \limits_{\Delta} \left(\log\left(\frac{\rho_t^-}{\trf_t^-}\right) - \log\left(\frac{\rho_t^+}{\trf_t^+}\right)\right) \rho_t^{-} \left|\vct{u}_t \cdot \vct{\nu}\right|  \D \vct{x}
             + \int \limits_{\Delta} \left(\left(\frac{\rho_t^-}{\trf_t^-} - \frac{\trf_t^-}{\trf_t^-} \right) - \left(\frac{\rho_t^+}{\trf_t^+} - \frac{\trf_t^+}{\trf_t^+} \right) \right) \trf_t^{-} \left|\vct{u}_t \cdot \vct{\nu}\right| \D \vct{x}\right)\\
            &+  \int \limits_{\outbound_t} \left(\log\left(\frac{\rho_t}{\trf_t}\right) \rho_t  - \rho_t + \trf_t\right) \left(-\vct{u}_t \cdot \vct{\nu}\right) \D \vct{x}
            + \int \limits_{\inbound_t} \left(\log\left(\frac{\rho_t}{\trf_t}\right) \rho^{\mathrm{in}}_t           
            + \left(\frac{\rho_t}{\trf_t} - \frac{\trf_t}{\trf_t}\right) \trf_t^{\mathrm{in}}\right) \left(- \vct{u}_t \cdot \vct{\nu}\right)\D \vct{x}.
        \end{aligned}
    \end{equation*}
    We simplify the summation over edge terms as  
    \begin{equation*}
        \begin{aligned}
            &\sum \limits_{\Delta \in \mathbb{K}^{\circ}_{t}} \left(\int \limits_{\Delta} \left(\log\left(\frac{\rho_t^-}{\trf_t^-}\right) - \log\left(\frac{\rho_t^+}{\trf_t^+}\right)\right) \rho_t^{-} \left|\vct{u}_t \cdot \vct{\nu}\right|  \D \vct{x}
             + \int \limits_{\Delta} \left(\left(\frac{\rho_t^-}{\trf_t^-} - \frac{\trf_t^-}{\trf_t^-} \right) - \left(\frac{\rho_t^+}{\trf_t^+} - \frac{\trf_t^+}{\trf_t^+} \right) \right) \trf_t^{-} \left|\vct{u}_t \cdot \vct{\nu}\right|\D \vct{x}\right)\\
            = &\sum \limits_{\Delta \in \mathbb{K}^{\circ}_{t}} \int \limits_{\Delta} \left(\log\left(\frac{\trf^+_t}{\trf_t^-}\right) \rho_t 
             + \left(\frac{\rho_t}{\trf_t^-}  - \frac{\rho_t}{\trf_t^+} \right) \trf_t^{-} \right)\left|\vct{u}_t \cdot \vct{\nu}\right| \D \vct{x}
            = \sum \limits_{\Delta \in \mathbb{K}^{\circ}_{t}} \int \limits_{\Delta} \left(\log\left(\frac{\trf^+_t}{\trf_t^-}\right) - \frac{\trf^-_t}{\trf_t^+}  + 1 \right) \rho_t \left|\vct{u}_t \cdot \vct{\nu}\right| \D \vct{x}\\
           =& \sum \limits_{\Delta \in \mathbb{K}^{\circ}_{t}} \int \limits_{\Delta} \frac{\D}{\D s} \left.\log\left(\frac{\trf^-_t - s}{\trf_t^+ + s}\right) \left(\trf_t^- - s \right) - \left(\trf_t^- - s\right) + \left(\trf_t^+ + s\right) \right|_{s = 0} \rho_t \left|\vct{u}_t \cdot \vct{\nu}\right| \D \vct{x}\\
           = & \sum \limits_{\Delta \in \mathbb{K}^{\circ}_{t}} \frac{\D}{\D s} \left. \BigDivKL{\left.\trf_t^-\right|_{\Delta} - s\rho_t \left|\vct{u}_t \cdot \vct{\nu}\right|}{\left.\trf_t^+\right|_{\Delta} + s\rho_t \left|\vct{u}_t \cdot \vct{\nu}\right|} \right|_{s = 0}.
        \end{aligned}
    \end{equation*}
    Plugging this result into \cref{eqn:dg_kl_bound_aux}, using $\vct{u}_t \equiv0$ on $\partial \pdedomain$, and taking the infimum over $\tef \in \trs$ yields the result.
\end{proof}

\section{Implementation and numerical experiments}

\subsection{Implementation}

\subsection*{Deriving the numerical scheme} 

The boundary term in equation \cref{eqn:DFRgalerkin_transport} is replaced with a numerical flux term $\hat{f}$.
For all $1 \leq j \leq m$,
\begin{equation}
    \label{eqn:DFRgalerkin_transport_flux}
    \sum_{j = 1}^{m}  \dot{\trc}_j \int \limits_{\omega_{o_i}} \frac{\phi_j \phi_i}{\sum_{k = 1}^m \trc_k \phi_k} \D \vct{x} 
    -  \trc_j \int \limits_{\omega_{o_i}}\left[\cnst{D} \frac{\phi_i}{{\sum_{k = 1}^m \trc_k \phi_k}} \right] \phi_j \vct{u} \D \vct{x} 
    + \int \limits_{\partial \omega_{o_i}}\frac{\phi_i  }{{\sum_{k = 1}^m \trc_k \phi_k}} f_{\text{up}}^{\omega_{k},\hat{\rho},\vct{u}_{t}} \D \vct{x} 
    = 0 .
\end{equation}
This semidiscretization is expressed in matrix form as
\begin{equation}
    \mtx{M}^{\trf} \vct{\dot{\trc}} + \mtx{K}^{\trf}\vct{\trc} + \vct{g}^{\trf} = 0,
\end{equation}
and rearranged to obtain
\begin{equation}
    \label{eqn:DFRgalerkin_transport_matrix}
    \vct{\dot{\trc}} = 
    - \left( \mtx{M}^{\trf}  \right)^{-1} \left( \mtx{K}^{\trf} + \vct{g}^{\trf} \right),
\end{equation}
which is solvable by standard time integratorj.
We express the mass matrix, stiffness matrix and numerical flux vector as 
\begin{equation*}
    \mtx{M}^{\trf} = 
    \left(
        \int \limits_{\omega_{o_i}} \frac{\phi_j \phi_i}{\sum_{k = 1}^m \trc_k \phi_k} \D \vct{x}
    \right)_{1 \leq i, j \leq m} ,\quad
    \mtx{K}^{\trf} = 
    \left(  
        - \int \limits_{\omega_{o_i}}\left(\cnst{D} \frac{\phi_i}{{\sum_{k = 1}^m \trc_k \phi_k}} \right) \phi_j \vct{u} \D \vct{x}  
    \right)_{1 \leq i, j \leq m}
\end{equation*}
\begin{equation*}
    \vct{g}^{\trf} = 
    \left( 
        \int \limits_{\partial \omega_{o_i}}\frac{\phi_i  }{{\sum_{k = 1}^m \trc_k \phi_k}} f_{\text{up}}^{\omega_{k},\hat{\rho},\vct{u}_{t}} \D \vct{x} 
    \right)_{1 \leq i, j \leq m}.
\end{equation*}
To simplify the computation of the stiffness term, we move the  divergence to the velocity term through integration by parts and expand it to obtain
\begin{equation}
    \mtx{K}^{\trf} = 
    \left(  
        \int \limits_{\omega_{o_i}} \frac{\phi_{i} D \phi_{j}}{\sum_{k = 1}^{m} \trc_k \phi_k} \cdot \vct{u}  \D \vct{x}
        + \int \limits_{\omega_{o_i}} \frac{\phi_{i}  \phi_{j}}{\sum_{k = 1}^{m} \trc_k \phi_k} D \vct{u} \D \vct{x}
        - \int \limits_{\partial \omega_{o_i}} \frac{\phi_{i} \phi_{j}}{\sum_{k = 1}^{m} \trc_k \phi_k} \vct{u} \cdot \vct{\nu}  \D \vct{x}
    \right)_{1 \leq i, j \leq m}.
\end{equation}
Expressing the velocity in terms of the basis $\{ \phi_{l} \}_{ 1 \leq l \leq m}$ as $\vct{u} = \sum_{l=1}^{m} \vct{u}_{l} \phi_{l}$, the stiffness matrix becomes
\begin{equation}
    \mtx{K}^{\trf} = 
     \left(  \sum_{l=1}^{m}
        \int \limits_{\omega_{o_i}} \frac{\phi_{i}  \phi_{l}}{\sum_{k = 1}^m \trc_k \phi_{k}}D \phi_{j} \cdot \vct{u}_{l}   \D \vct{x}
        + \int \limits_{\omega_{o_i}} \frac{ \phi_{i}  \phi_{j}}{\sum_{k = 1}^m \trc_k \phi_k} D \phi_{l} \cdot \vct{u}_{l}    \D \vct{x}
        - \int \limits_{\partial \omega_{o_i}} \frac{\phi_{i} \phi_{j} \phi_{l}}{\sum_{k = 1}^m \trc_k \phi_k} \vct{u}_{l} \cdot \vct{\nu}  \D \vct{x}
    \right)_{1 \leq i, j \leq m}.
\end{equation}

\subsection*{Implementation specifics}
All experiments described below were conducted on an Apple M2 8-core CPU running at 3.49GHz with 24 GB of RAM.
The code, along with the examples discussed below, are available at https://github.com/brookeyob/FRG.
We implemented the discontinuous Fisher-Rao Galerkin method in Julia, and tested cases in one and two dimensions.
We use a uniform mesh, which was quadrilateral in the 2D case.
The basis function set $\{ \phi_{j} \}_{1 \leq j \leq m}$ was defined by the Lagrange polynomials.
In 1D, we specify the Lagrange polynomial of order $p$ by points at $\{ x_{k} \}_{1 \leq k \leq p}$ as
\begin{equation}
    L_{i}(x) = \begin{cases}
        \prod_{\substack{ k = 0, \\ k \neq i } } \frac{x - x_{k}}{x_{i} - x_{k}},  & x \in \omega_{o_{i}} \\
        0, & \text{else}
    \end{cases}.
\end{equation}
We extend this to higher dimensions by taking products of the polynomials in 1D.
We choose the ${x_{k}}$ to be equidistant.
The mass matrix $\mtx{M}^{\trf}$ and stiffness matrix $\mtx{K}^{\trf}$ are block diagonal.
The block size are $p+1$ in 1D and $(p+1)^2$ in 2D, for basis functions with polynomial order $p$.
The block is determined by the number of basis functions in each $\omega_{o_{i}}$, for $1 \leq i \leq m$.
We perform numerical integration using Clenshaw–Curtis quadrature.
The Clenshaw–Curtis weights are positive meaning that positive integrands always yield positive integrals.
The spatial semidiscritization in equation \cref{eqn:DFRgalerkin_transport_matrix} was combined with an explicit third-order strong stability preserving Runge-Kutta (SSPRK3) time integration scheme \cite{isherwood2018strong} to obtain a solution.

\subsection{Numerical experiments}
\subsection*{Explored scenarios}

We demonstrate the utility of (discontinuous) Fisher-Rao Galerkin methods through numerical examples.
Specifically, we highlight how the DFRG method preserves positivity and maintains invariance properties in both 1D and 2D.
The invariance to reparametrization becomes crucial when resolving solutions with a changing scale of density variations, such as in cycles of compression and expansion.
Large density variations increase the likelihood of negative values, as small values interact with large fluxes.
Higher-order schemes reduce dispersion and dissipation errors, achieving higher accuracy with fewer degrees of freedom \cite{Ainsworth2004}, but they often produce spurious oscillations that introduce negative values in rarefied regions near steep gradients.
To ensure monotonicity at discontinuities, first-order schemes are often used, but they often add excessive diffusion to the solution \cite{Krivodonova2004}.
The DFRG method maintains positivity without sacrificing accuracy.
Rather, it achieves accuracy with respect to a different measure of error, the KL divergence instead of the $L^{2}$ norm.
% The examples shown here are toy problems that are not meant to be physically realistic, but rather to highlight the properties of a DFRG method.
To simplify the examples, we use constant in time velocity fields and periodic boundary conditions over the domain $[0,1]^d$.

\begin{figure}[htb]

   \centering

   \scalebox{1.0}{
      \includegraphics{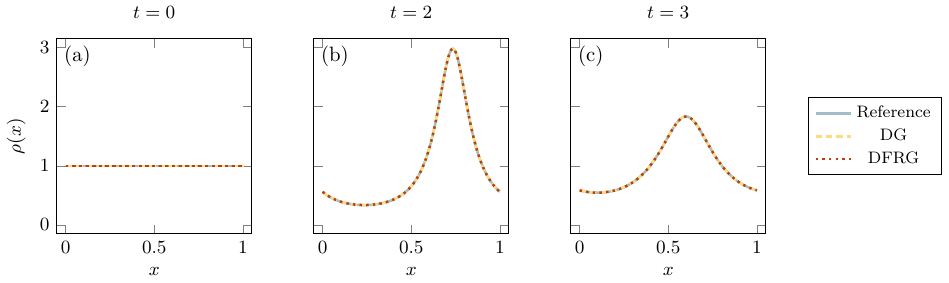}
   }

   \caption{ Density profile for \cref{ex:1D-mild-compression}, a 1D sinusoidal problem with polynomial order $p=1$, constant initial density, and fixed velocity $u(x) = sin(2 \pi x) + 2.0$.
   The solution is obtained using the DG and DFRG methods with CFL = 0.1875 and $m=256$ cells.
   The profiles are shown at (a) $t=0$, (b) $t=2$, and (c) $t=3$.} 

   \label{fig:sinusoidal1_1D_density}

\end{figure}

\subsection*{Quantifying performance}
We quantify the performance of the DFRG method by measuring the error convergence under grid refinement.
We measure the error with respect to the $L^{1}$, $L^{2}$ norms, and KL divergence.
The $L^{2}$ norm error increases quadratically with density deviation and is dominated by high-density regions.
In contrast, the $L^{1}$ norm and KL divergence errors increase linearly with density deviation, making them more sensitive to regions of moderate density.
The KL divergence proves particularly useful for comparing relative deviations.

The experiments demonstrate the benefits of minimizing error with respect to the KL divergence, which the DFRG method uses, as opposed to the $L^{2}$ norm used in the DG method or even the $L^{1}$ norms.
We compare the DFRG method to a positivity preserving ($+$)-limiter method \cite{zhang2010positivity}.
The ($+$)-limiter method adjusts cells with negative values in a subdomain $\omega_{k}$, but with positive overall mass over $\omega_{k}$, by pulling the values to the mean of the cell until all values in the cell become positive.
If the mass of the cell is negative, the method sets the density of the cell to a small value $\epsilon = 10^{-15}$.
The ($+$)-limiter enables the computation of KL divergence errors, as the solution is always positive.
However, if any cells have negative mass, then the solution loses mass conservation, and the mass of the solution may drift over time depending on how often the positivity-preserving mechanism activates.
The DFRG method is locally mass conserving, and numerical results show that mass deviation over time remains on the order of machine precision.

\begin{figure}[htb]
   \centering

   \scalebox{1.0}{
      \includegraphics{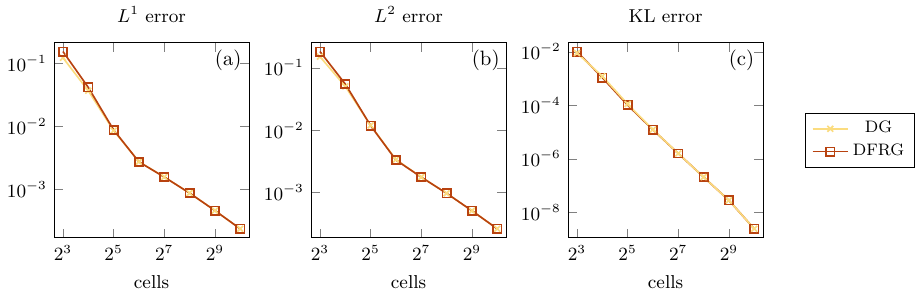}
   }

   \caption{ Error convergence under grid refinement for \cref{ex:1D-mild-compression}, a 1D sinusoidal problem with polynomial order $p=1$, constant initial density, and fixed velocity $u(x) = sin(2 \pi x) + 2.0$. The mean of the (a) $L^{1}$, (b) $L^{2}$, and (c) $KL$ errors with respect to the exact solution is measured at points spaced out by $\Delta t=0.01$ up to time $T=3$, for the DG, DG $+$-limiter and DFRG methods.}

   \label{fig:sinusoidal1_1D_error}

\end{figure}

\begin{table}[H]
    \centering

        \pgfplotstableset{  col sep=comma,
                            columns={   Discretization,
                                        L1_DG_error,
                                        L1_DG+limiter_error,
                                        L1_DFRG_error,
                                        L2_DG_error,
                                        L2_DG+limiter_error,
                                        L2_DFRG_error,
                                        KL_DG+limiter_error,
                                        KL_DFRG_error
                                        },
                            font=\footnotesize
                        }
        \pgfplotstabletypeset[
            every head row/.style={
                before row= { \toprule
                & \multicolumn{3}{c}{L$^{1}$ error}
                & \multicolumn{3}{c}{L$^{2}$ error}
                & \multicolumn{2}{c}{KL error} \\
                },
                after row=\midrule},
            every last row/.style={
                after row=\bottomrule},
            columns/Discretization/.style={column name=$\#$ cells},
            columns/L1_DG_error/.style={column name=DG, fixed, sci , sci 10e, sci zerofill, precision=2},
            columns/L1_DG+limiter_error/.style={column name=$+$-limiter, fixed, sci , sci 10e, sci zerofill, precision=2},
            columns/L1_DFRG_error/.style={column name=DFRG, fixed, sci , sci 10e, sci zerofill, precision=2},
            columns/L2_DG_error/.style={column name=DG, fixed, sci , sci 10e, sci zerofill, precision=2},
            columns/L2_DG+limiter_error/.style={column name=$+$-limiter, fixed, sci , sci 10e, sci zerofill, precision=2},
            columns/L2_DFRG_error/.style={column name=DFRG, fixed, sci , sci 10e, sci zerofill, precision=2},
            columns/KL_DG+limiter_error/.style={column name=$+$-limiter, fixed, sci , sci 10e, sci zerofill, precision=2},
            columns/KL_DFRG_error/.style={column name=DFRG, fixed, sci , sci 10e, sci zerofill, precision=2},
            columns/date/.style={string type},
        ]{error_csv/average_error_norm_method_disc_sinusoidal1_d_1_p_1_T_3-0_n_t_1_FR_weight_ref_false_is_exact_true/combined_error_values.csv}
    
        \caption{Error convergence under grid refinement for \cref{ex:1D-mild-compression}, a 1D sinusoidal problem with polynomial order $p=1$, constant initial density, and fixed velocity $u(x) = sin(2 \pi x) + 2$ .The mean of the $L^{1}$, $L^{2}$, and $KL$ errors with respect to the exact solution is measured at points spaced out by $\Delta t = 0.01$ up to time $T=3$ , for the DG, DG $+$-limiter, and DFRG methods.}
        \label{table:sinusoidal1_1D_error}
\end{table}

\subsection*{Sinusoidal problem}
We illustrate how the DFRG method handles compression and expansion cycles with large differences in scale by considering a 1D problem.
The problem uses a constant initial density and a sinusoidal velocity profile $u(x) = \sin(2\pi x) + 1 + a$, where $a > 0$.
This velocity profile causes cycles of mass accumulating near areas of relatively low velocity and then dispersing due to the bulk motion of the fluid.
The parameter $a$ controls the minimum velocity; smaller values of $a$ lead to larger differences in scale between the compressed and rarified regions as more mass accumulates in areas of low velocity.
%Similar patterns are observed in (pressure) waves, where the medium is compressed and rarified as the wave propagates.

\begin{figure}[htb]
   \centering

      \includegraphics{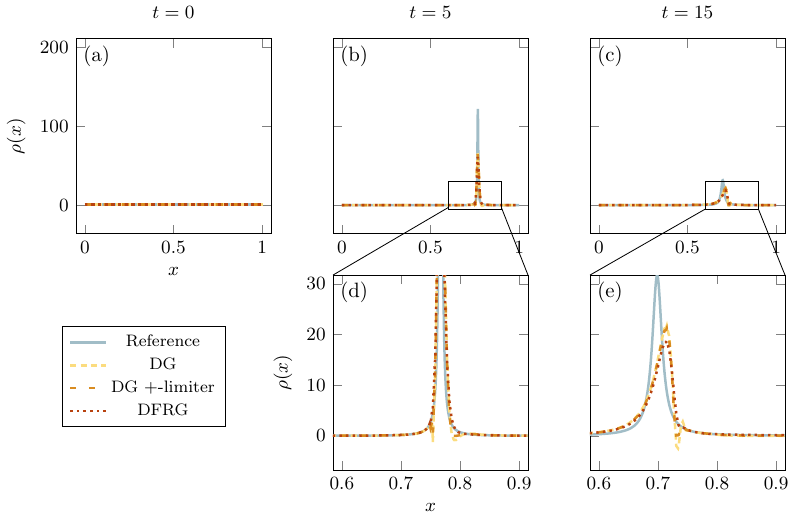}
      \includegraphics{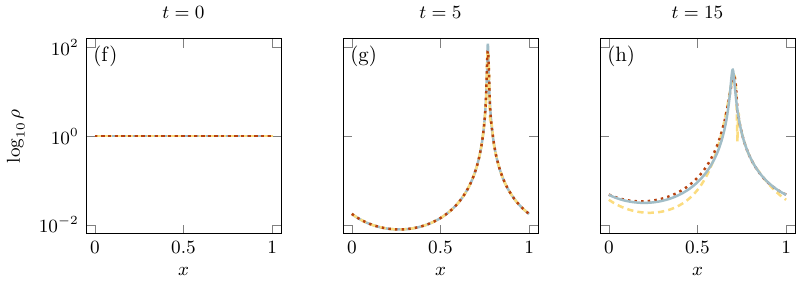}

   \caption{ Density profile for \cref{ex:1D-extreme-compression}, a 1D sinusoidal problem with polynomial order $p=1$, constant initial density, and fixed velocity $u(x) = sin(2 \pi x) + 1.01$.
   The solution is obtained using the DG and DFRG methods with CFL = 0.125, $m=256$ cells for plots (a-e), and $m=512$ cells for plots (f-h).
   The profiles are shown at (a,f) $t=0$, (b,d,g) $t=5$, and (c,e,h) $t=15$. } 

   \label{fig:sinusoidal2_1D_density_log_combo}

\end{figure}

\vspace{-5pt}

% \subsection*{Example 6.2.1: Mild compression} 
\begin{example}[Mild compression]
\label{ex:1D-mild-compression}
The example in \cref{fig:sinusoidal1_1D_density} uses a sinusoidal velocity profile with $a = 1$, which causes a relatively mild compression.
This example does not lead to a loss of positivity for the DG method and illustrates how the DFRG method performs when the positivity-preserving mechanism remains inactive.
\cref{fig:sinusoidal1_1D_error,table:sinusoidal1_1D_error} illustrates the grid convergence of the DG and DFRG methods under these conditions.
The errors with respect to the $L^{1}$, $L^{2}$ norms, and KL divergence converge at approximately the same rate.
When the positivity-preserving mechanism is inactive, the DFRG weighting does not significantly affect the solution's accuracy.
\end{example}

%\subsection*{Example 6.2.2: Extreme compression} 
\begin{example}[Extreme compression]
\label{ex:1D-extreme-compression}
The example in \cref{fig:sinusoidal2_1D_density_log_combo} uses a sinusoidal velocity profile with $a = 0.01$.
This small $a$ value causes a relatively extreme compression, creating a much larger difference in scale between the compressed and rarified regions.
This example is extreme enough to cause the DG method to lose positivity.
\cref{fig:sinusoidal2_1D_error_time_combo,table:sinusoidal2_1D_error} show the grid convergence of the DG and DFRG methods when the positivity-preserving mechanism activates.
The errors with respect to the $L^{1}$ and $L^{2}$ norms converge at approximately the same rate for the DG, ($+$)-limiter, and DFRG methods.
However, the error with respect to the KL divergence diverges in all methods except DFRG.

\begin{figure}[H]

   \centering

   \includegraphics{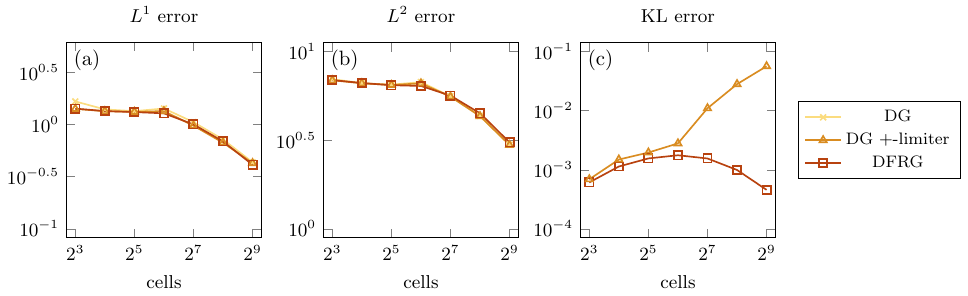}
   \includegraphics{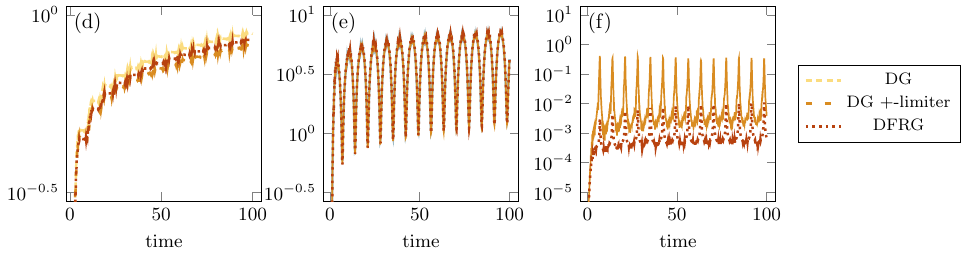}

   \caption{  Error convergence (a-c) and error over time (d-f) for \cref{ex:1D-extreme-compression}, a 1D sinusoidal problem with polynomial order $p=1$, constant initial density, and fixed velocity $u(x) = sin(2 \pi x) + 1.01$.
   The (a,d) $L^{1}$, (b,e) $L^{2}$ norms, and (c,f) $KL$ divergence with respect to the exact solution at points spaced out by $\Delta t=0.1$ up to time $T=100$ is shown for the DG, DG $+$-limiter and DFRG methods.
   The error convergence plots (a-c) show the average error over time, and the error over time plots (d-f), are for CFL = 0.1875 and $m=256$ cells.
   The KL error for DG is infinite, and thus not shown.}

   \label{fig:sinusoidal2_1D_error_time_combo}

\end{figure}

\begin{table}[htbp]
    \centering

        \pgfplotstableset{  col sep=comma,
                            columns={   Discretization,
                                        L1_DG_error,
                                        L1_DG+limiter_error,
                                        L1_DFRG_error,
                                        L2_DG_error,
                                        L2_DG+limiter_error,
                                        L2_DFRG_error,
                                        KL_DG+limiter_error,
                                        KL_DFRG_error
                                        },
                            font=\footnotesize
                        }
        \pgfplotstabletypeset[
            every head row/.style={
                before row= { \toprule
                & \multicolumn{3}{c}{L$^{1}$ error}
                & \multicolumn{3}{c}{L$^{2}$ error}
                & \multicolumn{2}{c}{KL error} \\
                },
                after row=\midrule},
            every last row/.style={
                after row=\bottomrule},
            columns/Discretization/.style={column name=$\#$ cells},
            columns/L1_DG_error/.style={column name=DG, fixed, sci , sci 10e, sci zerofill, precision=2},
            columns/L1_DG+limiter_error/.style={column name=$+$-limiter, fixed, sci , sci 10e, sci zerofill, precision=2},
            columns/L1_DFRG_error/.style={column name=DFRG, fixed, sci , sci 10e, sci zerofill, precision=2},
            columns/L2_DG_error/.style={column name=DG, fixed, sci , sci 10e, sci zerofill, precision=2},
            columns/L2_DG+limiter_error/.style={column name=$+$-limiter, fixed, sci , sci 10e, sci zerofill, precision=2},
            columns/L2_DFRG_error/.style={column name=DFRG, fixed, sci , sci 10e, sci zerofill, precision=2},
            columns/KL_DG+limiter_error/.style={column name=$+$-limiter, fixed, sci , sci 10e, sci zerofill, precision=2},
            columns/KL_DFRG_error/.style={column name=DFRG, fixed, sci , sci 10e, sci zerofill, precision=2},
            columns/date/.style={string type},
        ]{error_csv/average_error_norm_method_disc_sinusoidal2_d_1_p_1_T_100-0_n_t_10_FR_weight_ref_true_is_exact_true/combined_error_values.csv}
    
        \caption{Error convergence for \cref{ex:1D-extreme-compression}, a 1D sinusoidal problem with polynomial order $p=1$, constant initial density, and fixed velocity $u(x) = sin(2 \pi x) + 1.01$ . 
        The error is measured as the mean of the $L^{1}$, $L^{2}$ norms, and $KL$ divergence with respect to the exact solution at points spaced out by $\Delta t = 0.1$ up to time $T=100$ , for the DG, DG $+$-limiter, and DFRG methods.}
        \label{table:sinusoidal2_1D_error}
\end{table}

The KL divergence, which is more sensitive to relative differences in the solution, demonstrates the superior performance of the DFRG method compared to the DG method with the ($+$)-limiter in cases with large density variations.
\cref{fig:sinusoidal2_1D_error_time_combo} shows the error over time for the DG, ($+$)-limiter, and DFRG methods with respect to the $L^{1}$, $L^{2}$ norms, and KL divergence.
The error increases during compression periods and decreases during expansion periods.
The $L^{2}$ norm, which emphasizes large absolute deviations, disproportionately emphasizes errors in the compressed regions while ignoring small deviations in the rarified ones.
\cref{fig:sinusoidal2_1D_density_log_combo} also presents the log density plot for $m = 512$ elements, which does not require the ($+$)-limiter.
After a few compression and expansion cycles, the DG solution exhibits a larger relative error in the rarified regions compared to the compressed regions, whereas the DFRG solution maintains better accuracy.
The $L^{2}$ norm, which measures absolute differences, fails to capture this discrepancy, while the KL divergence does.
This shows the advantage of the DFRG method when relative accuracy in rarified regions is crucial.
This may help predict, for instance, whether chemical reactions relying on a small amount of a reactant occur.
\end{example}

%\subsection*{Example 6.2.3: Bump advection}
\begin{example}[Bump advection]
\label{ex:1D-bump}
Steep density gradients often lead to a loss of positivity due to dispersion error, which causes oscillations.
When the density approaches zero at the interface, these oscillations can produce negative density values.
To demonstrate this, we consider a 1D bump advection problem, as shown in \cref{fig:bump_1D_density_log_combo}, where a sigmoid function advects with a constant velocity.
The velocity is $u(x) = 1$, and the initial density is
\begin{figure}[h]
   \centering

   \scalebox{0.97}{
      \includegraphics{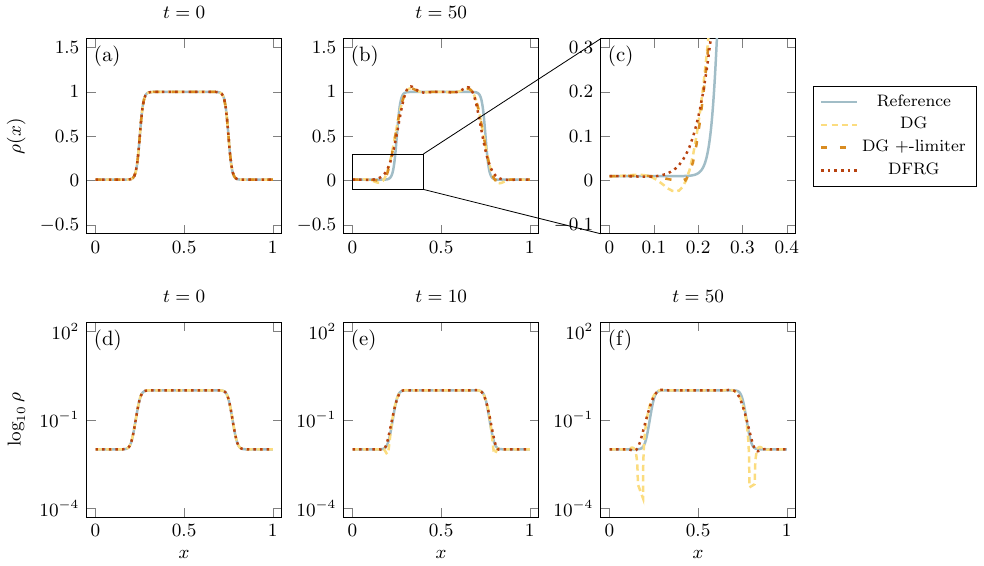}
      }

      \caption{   Density profile for \cref{ex:1D-bump},a 1D bump problem with polynomial order $p=1$, initial density equation \cref{eqn:bump_1D_density}, $b = 0.01$, $\mu = 0.5$, $k = 100$, and constant velocity $u(x) = 1$.
      The solution is obtained using the DG and DFRG methods with CFL = 0.0625, $m=128$ cells for plots (a-c), and $m=256$ cells for plots (d-f).
      Plots (d-f) are on a log scale.
      At $m=256$, DG remains positive and thus coincides with DG +-limiter
      The profiles are shown at (a,d) $t=0$, (e) $t=10$, and (b,c,f) $t=50$.} 
  
      \label{fig:bump_1D_density_log_combo}
      
  \end{figure}

\begin{equation}
   \rho(x,0) = 
   \begin{cases}
      (1-b) S(k(x-\mu)) + b & \text{if } 0 \leq x \leq 0.5, \\
      (b-1) S(k(x+\mu-1)) + 1  & \text{if } 0.5 \leq x \leq 1, \\
      b & \text{otherwise},
   \end{cases}
   \label{eqn:bump_1D_density}
\end{equation}
where $S$ is the standard logistic function, $b = 0.01$, $\mu = 0.5$, and $k = 100$.
\cref{fig:bump_1D_density_log_combo} highlights the oscillations near the steep gradient, where the DG method generates negative values in the low-density region, while the DFRG method preserves positivity.
\cref{fig:bump_1D_error,table:bump_1D_error} illustrates the grid convergence of the DG and DFRG methods for the bump problem.
As in the previous examples, the errors with respect to the $L^{1}$ and $L^{2}$ norms converge at approximately the same rate for all methods, while the KL divergence converges more quickly for the DFRG method.
\cref{fig:bump_1D_density_log_combo} also shows the log density plot for $m = 256$ elements, which does not require the ($+$)-limiter.
At $t=50$, the density near the interface is two orders of magnitude lower for the DG solution compared to the true solution.
The DFRG solution preserves the same order of magnitude throughout.
Instead, the DG method produces highly inaccurate results in rarefied regions near large density variations.
This effect resembles what we observe in the extreme compression example and demonstrates that it is not an isolated phenomenon.
\end{example}

\begin{figure}[H]
   \centering

   \scalebox{0.95}{
      \includegraphics{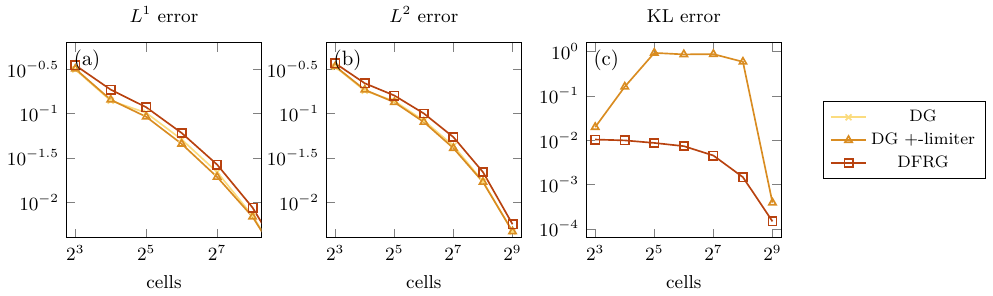}
   }

   \caption{   Error convergence for \cref{ex:1D-bump}, a 1D bump problem with polynomial order $p=1$, initial density equation \cref{eqn:bump_1D_density}, $b = 0.01$, $\mu = 0.5$, $k = 100$, and constant velocity $u(x) = 1$.
   The error is measured as the average of the (a) $L^{1}$, (b) $L^{2}$ norms, and (c) $KL$ divergence with respect to the exact solution at points spaced out by $\Delta t=0.01$ up to time $T=5$ , for the DG, DG $+$-limiter, and DFRG methods.  }

   \label{fig:bump_1D_error}

\end{figure}

\vspace{-10pt}

\begin{table}[htbp]
    \centering

        \pgfplotstableset{  col sep=comma,
                            columns={   Discretization,
                                        L1_DG_error,
                                        L1_DG+limiter_error,
                                        L1_DFRG_error,
                                        L2_DG_error,
                                        L2_DG+limiter_error,
                                        L2_DFRG_error,
                                        KL_DG+limiter_error,
                                        KL_DFRG_error
                                        },
                            font=\footnotesize
                        }
        \pgfplotstabletypeset[
            every head row/.style={
                before row= { \toprule
                & \multicolumn{3}{c}{L$^{1}$ error}
                & \multicolumn{3}{c}{L$^{2}$ error}
                & \multicolumn{2}{c}{KL error} \\
                },
                after row=\midrule},
            every last row/.style={
                after row=\bottomrule},
            columns/Discretization/.style={column name=$\#$ cells},
            columns/L1_DG_error/.style={column name=DG, fixed, sci , sci 10e, sci zerofill, precision=2},
            columns/L1_DG+limiter_error/.style={column name=$+$-limiter, fixed, sci , sci 10e, sci zerofill, precision=2},
            columns/L1_DFRG_error/.style={column name=DFRG, fixed, sci , sci 10e, sci zerofill, precision=2},
            columns/L2_DG_error/.style={column name=DG, fixed, sci , sci 10e, sci zerofill, precision=2},
            columns/L2_DG+limiter_error/.style={column name=$+$-limiter, fixed, sci , sci 10e, sci zerofill, precision=2},
            columns/L2_DFRG_error/.style={column name=DFRG, fixed, sci , sci 10e, sci zerofill, precision=2},
            columns/KL_DG+limiter_error/.style={column name=$+$-limiter, fixed, sci , sci 10e, sci zerofill, precision=2},
            columns/KL_DFRG_error/.style={column name=DFRG, fixed, sci , sci 10e, sci zerofill, precision=2},
            columns/date/.style={string type},
        ]{error_csv/average_error_norm_method_disc_bump_d_1_p_1_T_50-0_n_t_10_FR_weight_ref_true_is_exact_true/combined_error_values.csv}
    
        \caption{Error convergence for \cref{ex:1D-bump}, a 1D bump problem with polynomial order $p=1$, initial density equation \cref{eqn:bump_1D_density}, $b = 0.01$, $\mu = 0.5$, $k = 100$, and constant velocity $u(x) = 1$.
        The error is measured as the average of the $L^{1}$, $L^{2}$ norms, and $KL$ divergence with respect to the exact solution at points spaced out by $\Delta t = 0.01$ up to time $T=5$, for the DG, DG $+$-limiter and DFRG methods.}
        \label{table:bump_1D_error}

\end{table}

\vspace{-10pt}

\begin{figure}[h]
   \centering

   \includegraphics{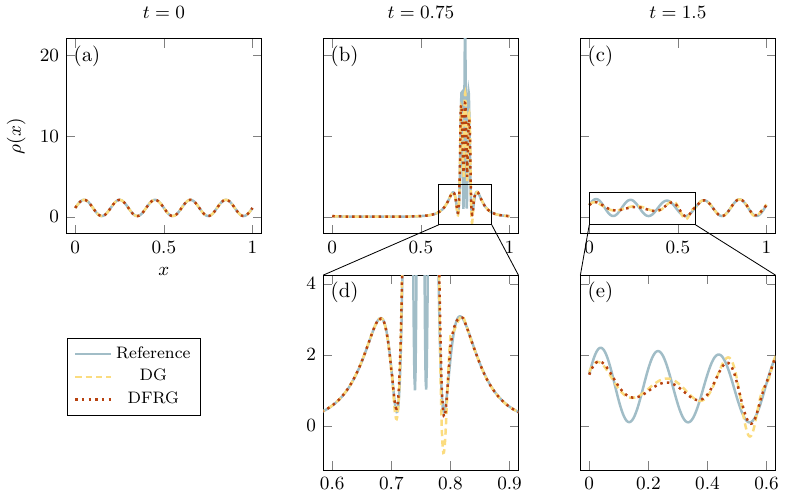}
   \includegraphics{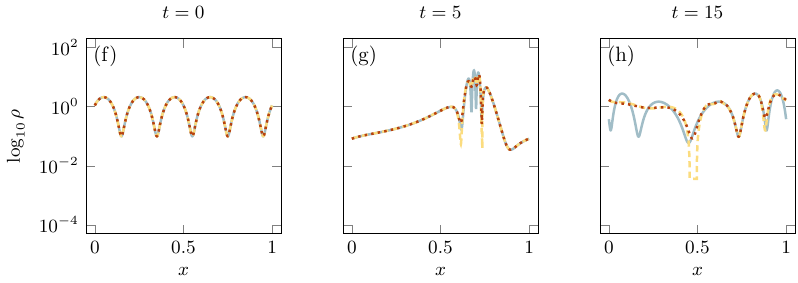}

   \caption{  Density profile for \cref{ex:1D-fine-details}, a 1D fine scale sinusoidal problem with polynomial order $p=3$, initial density $\rho_{0}(x) = sin(10 \pi x) + 1.1$, and fixed velocity $u(x) = sin(2 \pi x) + 1.2$.
   The solution is obtained using the DG and DFRG methods with CFL = 0.034375 and $m=64$ cells.
   Plots (f-h) are on a log scale.
   Profiles are shown at (a,f) $t=0$, (b,d,g) $t=0.75$, and (c,e,h) $t=1.5$.} 

   \label{fig:sinusoidal3_1D_density_log_combo}
   
\end{figure}

%\subsection*{Example 6.2.4: Resolving fine-details}
\begin{example}[Resolving fine-details]
\label{ex:1D-fine-details}
Higher-order schemes help resolve fine-details in the solution but can also introduce spurious oscillations near steep gradients, resulting in negative values.
This behavior is evident in the 1D fine-scale sinusoidal problem shown in \cref{fig:sinusoidal3_1D_density_log_combo}.
This example features a sinusoidal wave with fine-scale oscillations undergoing a compression and expansion cycle.
The initial density is $\rho(x,0) = \sin(10 \pi x) + 1.1$, and the velocity is $u(x) = \sin(2 \pi x) + 1.2$.
The symmetric velocity profile causes a symmetric compression and expansion cycle, so the fine-scale pattern initially observed reappears after a single cycle.
A higher-order Galerkin method resolves the fine-scale pattern with fewer cells than a lower-order method.
However, when using too few cells, neither method resolves the fine-scale pattern accurately in regions with higher velocity, though both perform well in regions of low velocity.
Under these conditions, the DG method produces negative values near the steep gradient, while the DFRG method maintains positivity.
\cref{fig:sinusoidal3_1D_p3_error_time_combo,table:sinusoidal3_1D_p3_error} shows the grid convergence of the DG and DFRG methods for the fine-scale sinusoidal problem.
As in the previous examples, the errors with respect to the $L^{1}$ and $L^{2}$ norms converge at approximately the same rate, while the KL divergence converges faster for the DFRG method.
When examining the error over time in \cref{fig:sinusoidal3_1D_p3_error_time_combo}, the error increases during compression periods and decreases during expansion periods.
\cref{fig:sinusoidal3_1D_density_log_combo} shows the log density plot for $m = 64$ elements.
Similar to the previous examples, the DFRG method achieves better accuracy in rarified regions than the DG method.
\end{example}

\begin{table}[H]
    \centering

        \pgfplotstableset{  col sep=comma,
                            columns={   Discretization,
                                        L1_DG_error,
                                        L1_DG+limiter_error,
                                        L1_DFRG_error,
                                        L2_DG_error,
                                        L2_DG+limiter_error,
                                        L2_DFRG_error,
                                        KL_DG+limiter_error,
                                        KL_DFRG_error
                                        },
                            font=\footnotesize
                        }
        \pgfplotstabletypeset[
            every head row/.style={
                before row= { \toprule
                & \multicolumn{3}{c}{L$^{1}$ error}
                & \multicolumn{3}{c}{L$^{2}$ error}
                & \multicolumn{2}{c}{KL error} \\
                },
                after row=\midrule},
            every last row/.style={
                after row=\bottomrule},
            columns/Discretization/.style={column name=$\#$ cells},
            columns/L1_DG_error/.style={column name=DG, fixed, sci , sci 10e, sci zerofill, precision=2},
            columns/L1_DG+limiter_error/.style={column name=$+$-limiter, fixed, sci , sci 10e, sci zerofill, precision=2},
            columns/L1_DFRG_error/.style={column name=DFRG, fixed, sci , sci 10e, sci zerofill, precision=2},
            columns/L2_DG_error/.style={column name=DG, fixed, sci , sci 10e, sci zerofill, precision=2},
            columns/L2_DG+limiter_error/.style={column name=$+$-limiter, fixed, sci , sci 10e, sci zerofill, precision=2},
            columns/L2_DFRG_error/.style={column name=DFRG, fixed, sci , sci 10e, sci zerofill, precision=2},
            columns/KL_DG+limiter_error/.style={column name=$+$-limiter, fixed, sci , sci 10e, sci zerofill, precision=2},
            columns/KL_DFRG_error/.style={column name=DFRG, fixed, sci , sci 10e, sci zerofill, precision=2},
            columns/date/.style={string type},
        ]{error_csv/average_error_norm_method_disc_sinusoidal3_d_1_p_3_T_15-0_n_t_1_FR_weight_ref_false_is_exact_true/combined_error_values.csv}
    
        \caption{Error convergence for \cref{ex:1D-fine-details}, a 1D sinusoidal problem with polynomial order $p=3$, initial density $\rho_{0}(x) = sin(10 \pi x) + 1.01$, and fixed velocity $u(x) = sin(2 \pi x) + 1.2$ . 
        The error is measured as the mean of the $L^{1}$, $L^{2}$ norms, and $KL$ divergence with respect to the exact solution at points spaced out by $\Delta t = 0.1$ up to time $T=15$ , for the DG, DG $+$-limiter, and DFRG methods.}
        \label{table:sinusoidal3_1D_p3_error}
\end{table}

\begin{figure}[H]

   \centering

   \includegraphics{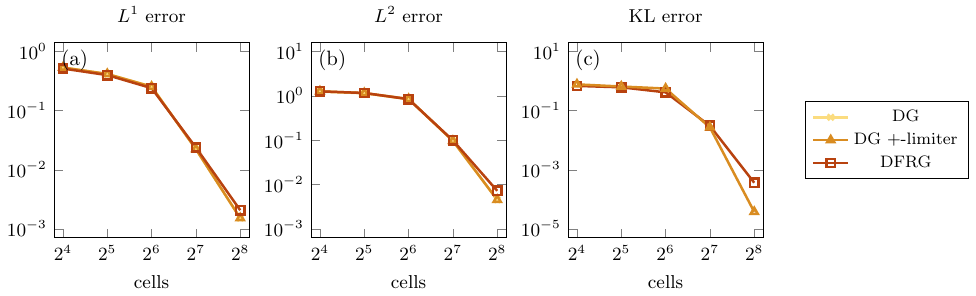}
   \includegraphics{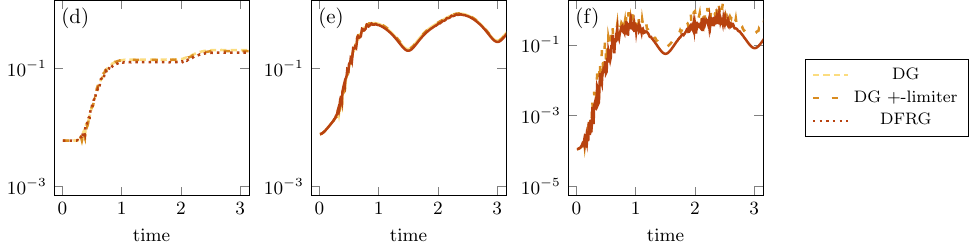}

   \caption{  Error convergence (a-c) and error over time (d-f) for \cref{ex:1D-fine-details}, a 1D sinusoidal problem with polynomial order $p=3$, initial density $\rho_{0}(x) = sin(10 \pi x) + 1.01$, and fixed velocity $u(x) = sin(2 \pi x) + 1.2$.
   The $L^{1}$, $L^{2}$ norms, and $KL$ divergence with respect to the exact solution at points spaced out by $\Delta t=0.1$ up to time $T=15$ is shown for the DG, DG $+$-limiter, and DFRG methods with CFL = 0.1875.
   The error convergence plots (a-c) show the average error over time, and the error over time plots (d-f) are for $m=256$ cells. }

   \label{fig:sinusoidal3_1D_p3_error_time_combo}

\end{figure}

\begin{figure}[h]
   \centering

   %\pgfplotsset{
   %   % this *defines* a custom colormap ...
   %       colormap={rustblue}{
   %           color(-0.05cm)=(rust);
   %           color(-0.005cm)=(lightlightsilver);
   %           color(0.5cm)=(steelblue);
   %           color(1.05cm)=(darksky)
   %       }
   %   }
  
      \scalebox{0.95}{
         \includegraphics{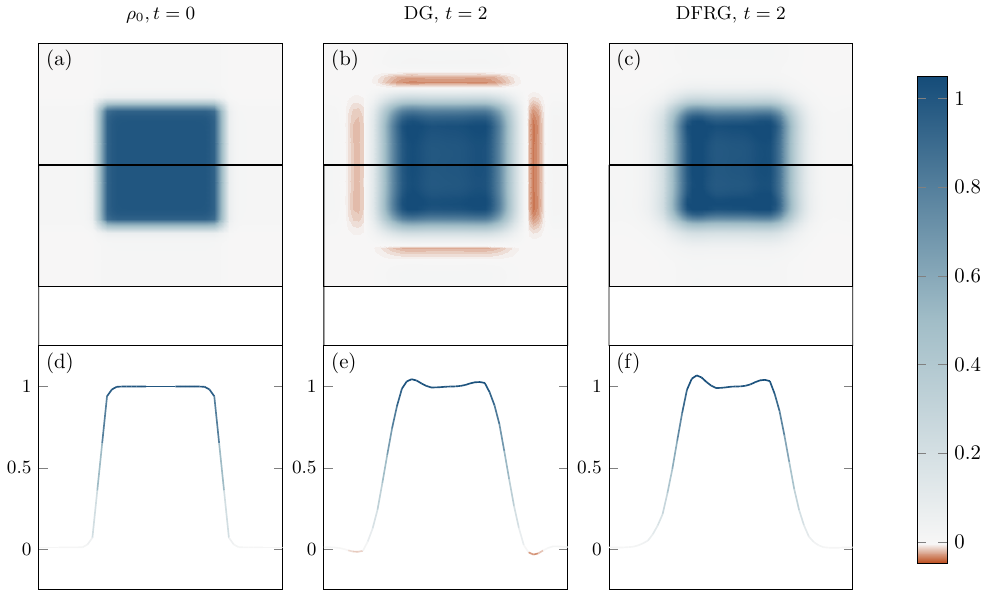}

      }

   \caption{ Density profile for \cref{ex:2D-bump}, a 2D bump problem with polynomial order $p=1$, initial density specified in Equation \cref{eqn:bump_2D_density}, and a constant velocity profile $\vct{u}(x,y) = \begin{bmatrix} 1 & 0.5 \end{bmatrix}$.
   The solution is obtained using the (b,e) DG and (c,f) DFRG methods with $m=32$ cells.
   Profiles are shown at (a,d) $t=0$, and (b-c,e-f) $t=2$.
   The plots (a-c) are contour plots of the density, while plots (d-f) show a slice of the density at $y=0.5$.} 

   \label{fig:bump_2D_density}

\end{figure}

\begin{table}[h]
    \centering

        \pgfplotstableset{  col sep=comma,
                            columns={   Discretization,
                                        L1_DG_error,
                                        L1_DG+limiter_error,
                                        L1_DFRG_error,
                                        L2_DG_error,
                                        L2_DG+limiter_error,
                                        L2_DFRG_error,
                                        KL_DG+limiter_error,
                                        KL_DFRG_error
                                        },
                            font=\footnotesize
                        }
        \pgfplotstabletypeset[
            every head row/.style={
                before row= { \toprule
                & \multicolumn{3}{c}{L$^{1}$ error}
                & \multicolumn{3}{c}{L$^{2}$ error}
                & \multicolumn{2}{c}{KL error} \\
                },
                after row=\midrule},
            every last row/.style={
                after row=\bottomrule},
            columns/Discretization/.style={column name=$\#$ cells},
            columns/L1_DG_error/.style={column name=DG, fixed, sci , sci 10e, sci zerofill, precision=2},
            columns/L1_DG+limiter_error/.style={column name=$+$-limiter, fixed, sci , sci 10e, sci zerofill, precision=2},
            columns/L1_DFRG_error/.style={column name=DFRG, fixed, sci , sci 10e, sci zerofill, precision=2},
            columns/L2_DG_error/.style={column name=DG, fixed, sci , sci 10e, sci zerofill, precision=2},
            columns/L2_DG+limiter_error/.style={column name=$+$-limiter, fixed, sci , sci 10e, sci zerofill, precision=2},
            columns/L2_DFRG_error/.style={column name=DFRG, fixed, sci , sci 10e, sci zerofill, precision=2},
            columns/KL_DG+limiter_error/.style={column name=$+$-limiter, fixed, sci , sci 10e, sci zerofill, precision=2},
            columns/KL_DFRG_error/.style={column name=DFRG, fixed, sci , sci 10e, sci zerofill, precision=2},
            columns/date/.style={string type},
        ]{error_csv/average_error_norm_method_disc_bump_d_2_p_1_1_T_3-0_n_t_1_FR_weight_ref_true_is_exact_true/combined_error_values.csv}
    
        \caption{Error convergence for \cref{ex:2D-bump}, a 2D bump problem with polynomial order $p=1$, initial density equation \cref{eqn:bump_2D_density}, and fixed velocity $\vct{u}(x,y) = \begin{bmatrix} 1 \\ 0.5 \end{bmatrix} $.
         The mean of the $L^{1}$, $L^{2}$, and $KL$ errors with respect to the exact solution is measured at points spaced out by $\Delta t = 0.01$ up to time $T=3$, for the DG, DG $+$-limiter, and DFRG methods.}
        \label{table:bump_2D_error}

\end{table}

%\subsection*{Example 6.2.5: 2D Bump advection}
\begin{example}[2D Bump advection]
\label{ex:2D-bump}
Many behaviors observed in the 1D cases also appear in the 2D case.
A representative example is the 2D bump advection problem shown in \cref{fig:bump_2D_density}.
This example uses a constant velocity profile $\vct{u}(x,y) = \begin{bmatrix} 1  &  0.5 \end{bmatrix}$, with the initial density profile
\begin{equation}
   \rho^{(2)}(x,y,0) = \rho(x,0) \rho(y,0), 
   \label{eqn:bump_2D_density}
\end{equation}
where $\rho$ is the 1D bump advection problem defined in \cref{eqn:bump_1D_density}.

\begin{figure}[h]
   \centering
   
   \scalebox{0.95}{
      \includegraphics{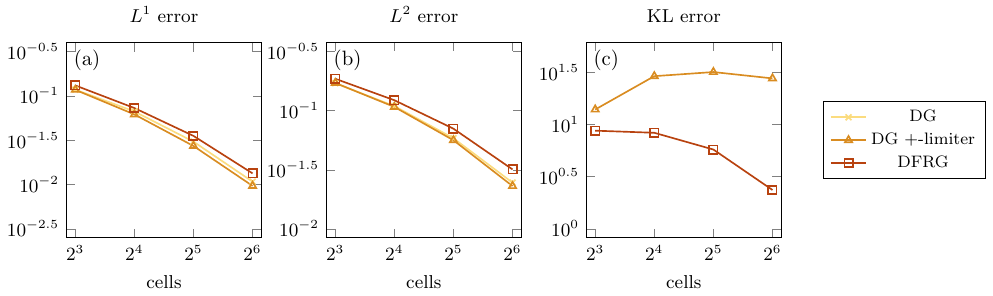}
   }

   \caption{ Error convergence for \cref{ex:2D-bump}, a 2D bump problem with polynomial order $p=1$, initial density equation \cref{eqn:bump_2D_density}, and fixed velocity $\vct{u}(x,y) = \begin{bmatrix} 1 & 0.5 \end{bmatrix} $.
   The mean of the (a) $L^{1}$, (b) $L^{2}$, and (c) $KL$ errors with respect to the exact solution is measured at points spaced out by $\Delta t=0.01$ up to time $T=3$, for the DG, DG $+$-limiter, and DFRG methods.}

   \label{fig:bump_2D_error}

\end{figure}

Similar to the 1D case, oscillations near the steep gradient cause the DG method to produce negative values, while the DFRG method preserves positivity throughout.
\cref{fig:bump_2D_error,table:bump_2D_error} shows the error convergence of the DG and DFRG methods for the 2D bump advection problem.
The errors with respect to the $L^{1}$ and $L^{2}$ norms converge at approximately the same rate for both methods, although the DFRG method performs slightly worse on these metrics.
However, the KL divergence converges much faster for the DFRG method than for the DG method with the ($+$)-limiter.
\end{example}

\begin{figure}[H]
   \centering

   %\pgfplotsset{
   %% this *defines* a custom colormap ...
   %   colormap={rusttosteel}{
   %      color(-0.05cm)=(rust);
   %      color(-0.005cm)=(lightlightsilver);
   %      color(0.5cm)=(steelblue);
   %      color(1.05cm)=(darksky)
   %   }
   %}

   \scalebox{0.94}{
      \includegraphics{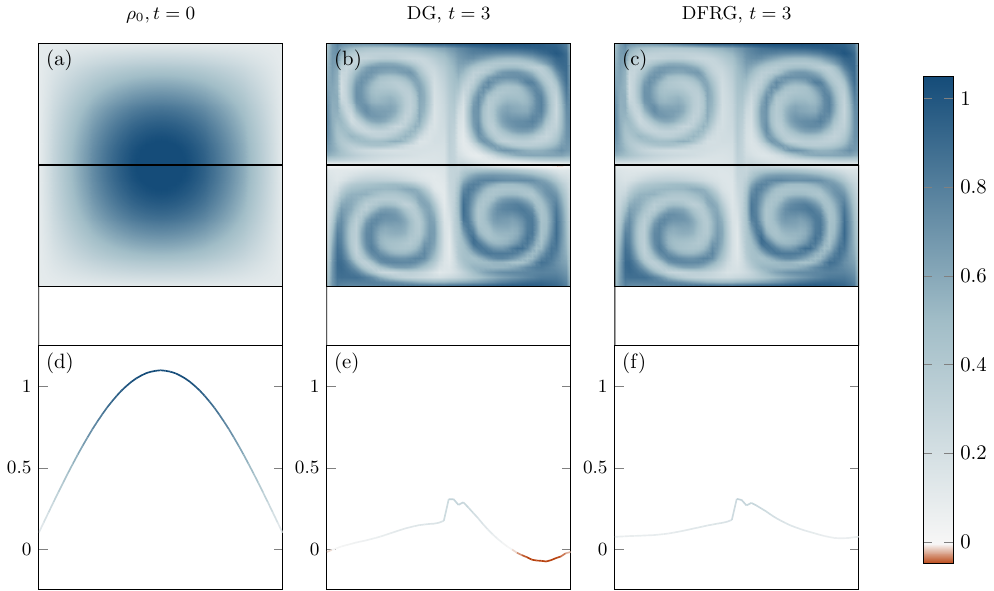}
   }

   \caption{ Density profile for \cref{ex:2D-swirl}, the 2D swirl problem with polynomial order $p=1$, initial density $\rho(x,y,0) = \sin( \pi x ) \sin( \pi y ) + 0.1$, and a constant velocity profile specified in equation \cref{eqn:swirl2_2D_velocity}.
   The solution is obtained using the (b,e) DG and (c,f) DFRG methods with $m=32$ cells.
   Profiles are shown at (a,d) $t=0$ and (b-c,e-f) $t=3$.
   The plots (a-c) are contour plots of the density, while plots (d-f) show a slice of the density at $y=0.5$.} 
      
   \label{fig:swirl2_2D_density}
\end{figure}

%\subsection*{Example 6.2.6: Resolving fine-details in 2D}
\begin{example}[Resolving fine-details in 2D]
\label{ex:2D-swirl}
Positivity-preserving methods often raise concerns about introducing excess numerical dissipation, which can smooth out fine details in the solution.
We demonstrate that the DFRG method resolves fine details in the solution without producing negative values.
The 2D swirl problem, shown in \cref{fig:swirl2_2D_density}, features a velocity field that generates a swirling motion.
The velocity profile is 
\begin{equation}
   \begin{split}
      u_{y}(x,y) = \sin( 2 \pi x ) \sin( 2 \pi ( y - 0.25 ) )  + 0.1, \\
      u_{x}(x,y) = \cos( 2 \pi x ) \cos( 2 \pi ( y - 0.25) ) + 0.1,
   \end{split}
   \label{eqn:swirl2_2D_velocity}
\end{equation}
and the initial density is 
\begin{equation}
   \rho(x,y,0) = \sin( \pi x ) \sin( \pi y ) + 0.1.
\end{equation}

As mass moves inward, the DG solution produces negative values on the outer edges of the swirl pattern.
In contrast, the DFRG solution avoids these negative values while preserving the swirl pattern's fine details.
\end{example}

\begin{figure}[h]
   \centering

   \scalebox{0.80}{
      \includegraphics{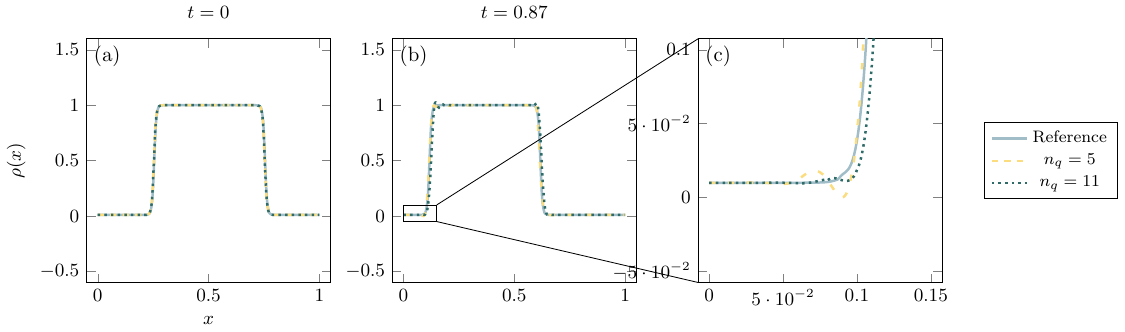}
   }

   \caption{ Density profile for \cref{ex:1D-quadrature-failure-mode}, a 1D bump problem with polynomial order $p=1$, initial density equation \cref{eqn:bump_1D_density}, $b = 0.01$, $\mu = 0.25$, $k = 200$, and constant velocity $u(x) = 1$.
   The solution is obtained using the DFRG method with $n_q = 5$ and $n_q = 11$ quadrature points,  polynomial order $p=3$, and $m=50$ cells.
   The profiles are shown at (a) $t=0$ and (b-c) $t=0.87$, plot (c) at $t=0.87$  is zoomed in to show the failure point near the steep gradient.} 
   \label{fig:bump_failed_1_1D_p3_density}
   
\end{figure}

%\subsection*{Example 6.2.7: Common failure mode}
\subsection*{Common failure mode}
The positivity-preserving property of the DFRG method depends on the choice of time discretization and quadrature.
The semidiscretization is derived in the continuous time limit. 
For finite time steps, positivity preservation is not guaranteed.
Satisfying the CFL condition mitigates this issue to some extent, but steep gradients within a single cell, especially with small densities on one end, can still cause failures.
In such cases, the Fisher-Rao weight may become orders of magnitude larger on one end of the cell compared to the rest of the domain, requiring very small time steps to maintain positivity.

\begin{example}[Failure due to quadrature]
\label{ex:1D-quadrature-failure-mode}
Quadrature accuracy plays a critical role in computing inner products.
A lack of quadrature points introduces quadrature error, increasing the likelihood of failure.
Conversely, improving quadrature accuracy reduces this risk.
\cref{fig:bump_failed_1_1D_p3_density} illustrates the solution to the 1D bump advection problem with $k = 0.005$, $\mu = 0.25$, and $b = 0.01$, which results in a steep density gradient.
Using third-order DFRG, the method fails with $n_q = 5$ quadrature points but succeeds with $n_q = 11$ quadrature points.
\end{example}

\begin{figure}[h]
   \centering

   \scalebox{0.8}{
         \includegraphics{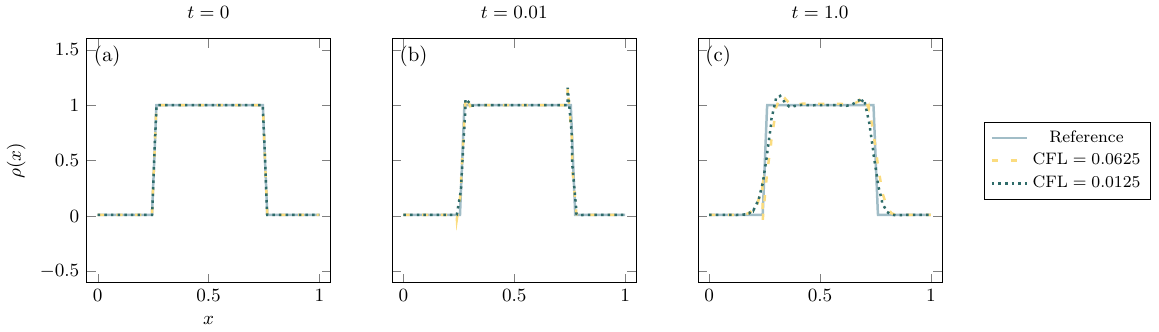}
      }

      \caption{ Density profile for \cref{ex:1D-time-step-failure-mode}, a 1D bump problem with polynomial order $p=1$, initial density equation \cref{eqn:bump_1D_density}, $b = 0.01$, $\mu = 0.25$, $k = 1000$, and constant velocity $u(x) = 1$.
      The solution is obtained using the DFRG method with $n_q = 5$ quadrature points,  polynomial order $p=1$, and $m=50$ cells.
      The profiles are shown at (a) $t=0$, (b) $t=0.01$, and (c) $t=1$.
      }

      \label{fig:bump_failed_2_1D_density}
   
\end{figure}

\vspace{-10pt}

\begin{example}[Failure due to time step]
\label{ex:1D-time-step-failure-mode}
Reducing the time step can minimize the chance of failure by limiting the size of negative density updates caused by imprecise inner products.
\cref{fig:bump_failed_2_1D_density} shows the solution to the 1D bump advection problem with $k = 0.001$, $\mu = 0.25$, and $b = 0.01$, which features an even steeper gradient than the previous example.
The method fails when CFL = 0.0625 but succeeds when the CFL is reduced to 0.0125.
\end{example}

Increasing the number of spatial cells in regions with steep gradients can also prevent failure.
Combining all these strategies adaptively in scenarios prone to failure can enhance the reliability of the DFRG method without significantly increasing computational costs.

\section{Comparison, conclusion, and outlook}
\subsection*{Comparison to prior work}
Maximum likelihood estimation is closely related to the maximum entropy principle.
In exponential families, it is equivalent to combining the latter with suitable moment constraints \cite{wainwright2008graphical}.
This relates our work to existing approaches employing entropy regularization for abstract or physical problems \cite{cuturi2013sinkhorn,lindsey2023fast,keith2024proximal,dokken2025latent}.
Different from existing works, we address dynamic hyperbolic problems that lack minimization or gradient flow structure.
Another related line of work are maximum entropy closures of kinetic problems \cite{levermore1996moment,hauck2008convex,porteous2021data}.
Different from these works, our approach applies not in the momentum, but the spatial dimension.
Furthermore, our work provides an alternative, and thus relates to, limiter-based approaches for positivity preservation \cite{shu2018bound}.
This makes its combination with PDE-based approaches to shock capturing, such as \cite{cao2023information,cao2024information}, especially promising.
More broadly, it is part of a growing research field investigating the interaction between transport and information geometric structures on families of probability measures \cite{jordan1998variational,amari2018information,amari2019information,peyre2019computational,cui2022time,li2023wasserstein,amari2023information}.
Our work views $\rho$ as a statistical estimator of physical reality, rather than a physical object itself, and adapts the Galerkin projection accordingly.
It thus serves as another example of ``information geometric mechanics'' in the sense of \cite{cao2023information}.

\subsection*{Conclusion and outlook}
In this work, we observe that the Galerkin projection of positivity constrained problems amounts to performing statistical inference using the method of moments (MoM).
Loss of positivity thus arises from a well-known limitation of MoM--- its susceptibility to producing estimates inconsistent with the observed data.
Our novel \emph{maximum likelihood discretization} overcomes this problem by replacing MoM with maximum likelihood estimation.
In the time-continuous limit, it simplifies to the Fisher-Rao Galerkin semidiscretization (FRG).
We show empirically that FRG preserves positivity and prove that it yields error bounds in the Kullback-Leibler divergence.
To simplify the derivation, our current work is limited to the linear transport equation.
Ongoing and future work includes its extension to other conservation laws, such as the Euler, Navier-Stokes, MHD, Boltzman and Vlasov systems.
Replacing the Kullback-Leibler divergence with other Bregman divergences promises extensions to a vast range of constraints.
We intentionally made this work agnostic to the particular choice of trial and test spaces.
In the future, we will investigate particular choices, such as local polynomials, to derive explicit error bounds and rigorously prove the positivity preservation observed in our numerical experiments.

\section*{Acknowledgments}
The authors gratefully acknowledge support from the Air Force Office of Scientific Research under award number FA9550-23-1-0668 (Information Geometric Regularization for Simulation and Optimization of Supersonic Flow) and from the Office of Naval Research under award number N00014-23-1-2545 (Untangling Computation)

\bibliography{references}
\bibliographystyle{plain}

\end{document}